\documentclass{amsart}
\usepackage{amsmath}
\usepackage{amsthm}
\usepackage{amssymb}
\usepackage[curve]{xypic}
\usepackage{dsfont}
\usepackage{latexsym}
\usepackage{varioref}
\originalTeX
\usepackage{cite}
\usepackage[T1]{fontenc}
\usepackage[utf8]{inputenc}
\usepackage{overpic}


\def\IZ{\mathds{Z}}
\def\IN{\mathds{N}}
\def\IR{\mathds{R}}
\def\IC{\mathds{C}}

\def\IF{\mathds{F}}

\def\Inv{\mathrm{Inv}}

\def\eps{\varepsilon}
\def\iso{\simeq}

\def\e{\mathrm{e}}

\newcommand\abs[1]{\mathord{\left\lvert#1\right\rvert}}
\newcommand\norm[1]{\mathord{\left\Vert#1\right\Vert}}

\newcommand{\undefined}{\Diamond}

\DeclareMathOperator{\interior}{int}

\DeclareMathOperator{\cl}{cl}

\DeclareMathOperator{\Def}{\mathcal{D}}

\DeclareMathOperator{\Con}{\mathcal{C}}

\DeclareMathOperator{\Hom}{\mathrm{H}}
\DeclareMathOperator{\dirlim}{\mathrm{dirlim}}

\DeclareMathOperator{\Diff}{D}
\newcommand{\grad}{\nabla}

\newcommand{\Hull}{\mathcal{H}}

\newcommand{\rep}[1]{\left[#1\right]}

\newtheorem{theorem}{Theorem}[section]
\newtheorem{lemma}[theorem]{Lemma}

\newtheorem{corollary}[theorem]{Corollary}

\theoremstyle{definition}
\newtheorem{definition}[theorem]{Definition}

\theoremstyle{remark}

\newcommand{\imag}{\mathrm{i}}

\begin{document}
	\title[Equilibrium-like solutions of asymptotically autonomous equations]{Equilibrium-Like Solutions of Asymptotically Autonomous Differential Equations}
	
	\author[A. J\"anig]{Axel J\"anig}
	
	\address
	{\textsc{Axel J\"anig}\\
		Institut f\"ur Mathematik\\
		Universit\"at Rostock\\
		18051 Rostock, Germany}
	
	\email{axel.jaenig@uni-rostock.de}
	
	\subjclass[2010] {Primary: 37B30, 37B55; Secondary: 34C99, 35B40, 35B41}
	\keywords{nonautonomous differential equations, asymptotically autonomous differential equations, 
		Morse decompositions, chain-recurrent sets,
		Morse-Conley index theory, nonautonomous Conley index, homology Conley index}
	
	\begin{abstract}%
		We analyze the chain recurrent set of skew product semiflows obtained from nonautonomous
		differential equations -- ordinary differential equations or semilinear parabolic differential
		equations. For many gradient-like dynamical systems, Morse-Smale dynamical systems e.g., the
		chain recurrent set contains only isolated equilibria. 	
		The structure in the asymptotically autonomous setting is richer but still close 
		to the structure of a Morse-Smale dynamical system. 
		
		The main tool used in this paper is a nonautonomous flavour of Conley index theory
		developed by the author. We will see that for a class of good equations, the Conley index
		can be understood in terms of equilibria (in a generalized meaning) and their connections.
		This allows us to find specific solutions of asymptotically autonomous equations and generalizes
		properties of Morse-Smale dynamical systems to the asymptotically autonomous setting.
	\end{abstract}
	
	\maketitle
	
	A gradient-like dynamical system has only two types of solutions: equilibria
	or heteroclinic connections. Prototypical examples are given by differential equations of the form
	\begin{equation}
		\label{eq:171121-1127}
		\dot x = \grad \Phi(x)
	\end{equation}
	
	Assume that each of these equilibria is hyperbolic, that is, for every equilibrium $x_0$
	it holds that all eigenvalues of the derivative $\Diff(\grad \Phi(x_0))$ have nonzero real parts.
	\eqref{eq:171121-1127} could be perturbed by a nonautonomous function $f(t,x)$ converging to
	$0$ uniformly on compact sets as $t\to\infty$.
	
	\begin{equation}
		\label{eq:171121-1159}
		\dot x = \grad \Phi(x) + f(t,x)
	\end{equation}
	
	A hyperbolic equilibrium admits an exponential dichotomy. Indeed, hyperbolicity can be characterized
	by the existence of exponential dichotomies. Hence, if the perturbation $f$ is small enough,
	is exactly one entire or full solution\footnote{A solution $u$ is called full or entire if $u$ is defined for all $t\in\IR$.} solution of \eqref{eq:171121-1159} in a small neighbourhood of the original equilibrium $x_0$. This solution $u$ converges
	to $x_0$ as $\abs{t}\to\infty$, so it seems that the role of an equilibrium is played by connections
	between equilibria
	
	We will not define the notion of a nonautonomous equilibrium. It is not uncommon to define
	nonautonomous equilibria as an entire bounded solution. This is compatible with the findings
	in this paper as long as one is interested in hyperbolic solutions.
	
	If $f$ is more than a small perturbation of a nonautonomous problem, one can no longer
	expect a one-to-one correspondence between equilibria of the nonautonomous problem and
	its autonomous limit. Recall that, for an autonomous problem,
	the number of equilibria might be related to the Betti numbers of a manifold or the
	asymptotic behaviour of $\grad \Phi$. Many of these results can be proved by using
	the so-called Conley index. This might not be the only technique, and the index might not always
	be applicable but the approach is often a successful. Since there is a nonautonomous extension of the Conley index,
	we would like to study the index in relation to equilibrium solutions, by which we mean
	heteroclinic solutions connecting two equilibria of the same Morse index.
	
	The Conley index cannot be applied directly to \eqref{eq:171121-1159}. Furthermore, it would also
	be interesting to include equations which are asymptotically autonomous but with different
	limits at $-\infty$ and $\infty$. Assume that $E$ is a finite-dimensional normed space,
	and let $Y$ denote the space of all $f:\;\IR\times E\to E$ being continuous in 
	both variables, continuously differentiable and Lipschitz-continuous in the second one. 
	$Y$ is equipped with a metric such that
	$f_n\to f$ in $Y$ if and only if $f_n\to f$ uniformly on compact subsets of $\IR\times E$.
	A function $f\in Y$ is said to be autonomous if $f(t,x) = f(0,x)$ for all $(t,x)\in\IR\times E$.
	$f$ is asymptotically autonomous if there are autonomous $f^{\pm\infty}\in Y$ such that
	$d(f^t, f^{\pm\infty})\to 0$ as $t\to\pm\infty$, where $f^s(t,x) := f(t+s,x)$ denotes the translation
	in the time variable, which defines a flow on $Y$.
	
	To apply the nonautonomous Conley index, an initial element is required, that is, an $f_0\in Y$
	such that $\omega(f_0) = \Hull(f)$, where $\omega(f_0)$ denotes the usual $\omega$-limes set with
	respect to the translational flow on on $f$ and $\Hull(f) := \cl_Y\{f^t:\;t \in\IR\}$ is the hull of $f$.
	It is easy to see that an initial element $f_0$ can only exists if $f^{-\infty} = f^{\infty}$.
	The situation is sketched in the diagram below.
	\begin{equation*}
		\xymatrix@C=2cm{
			f^{-\infty} \ar@/_5mm/[r]^-{f} & f^\infty
		}
	\end{equation*}
	The missing link, so to speak, is a reverse connection from $f^\infty \to f^{-\infty}$.
	\begin{equation*}
	\xymatrix@C=2cm{
		f^{-\infty} \ar@/_5mm/[r]^-{f} & \ar@/_5mm/[l]^-g f^\infty
	}
	\end{equation*}
	In many cases, $g(t,x) := f(-t,x)$ is a useful choice. Given a circle
	as above, $f_0$ can be chosen as a rotation in the above diagram which becomes
	slower and slower at $f^{-\infty}$ and $f^{\infty}$ as $t\to\infty$, so $\omega(f_0) = \Hull(f)\cup \Hull(g)$.
	
	Suppose $K\subset \omega(f_0) \times E$ is a compact invariant subset. Without
	additional assumptions, we know $K$ is not empty unless its index is trivial, but we cannot say much more about $K$.
	That is why we focus on a class of good problems.
	These assumptions consist of two parts. The first one concerns the
	limit equations determined by $f^\infty$ and $f^-\infty$, imposing a weak version
	of a Morse-Smale condition that is, (1) and (2) are in particular satisfied if the limit equations
	are Morse-Smale, but transverse intersection of stable and unstable manifolds is not required.
	
	\begin{enumerate}
		\item[(1)] There are only finitely many equilibria, and each of them is hyperbolic.
		\item[(2)] If $u:\;\IR\to E$ is a solution of
		\begin{equation*}
			x_t = f^{\pm\infty}(0,x)
		\end{equation*}		
			with $u(t)\to e^\pm$ as $t\to\pm\infty$, then $m(e^+)<m(e^-)$, where $m(e)$
			denotes the Morse index of the equilibrium $e$, that is, the dimension of its
			local unstable manifold.
	\end{enumerate}

	The second part of the assumptions 
	concerns the nonautonomous problem. It is clear that a bounded solution $u:\;\IR\to E$
	of \eqref{eq:171121-1404} 
	converges to equilibria $e^\pm$ as $t\to\pm\infty$ by virtue of (1) and (2). 	
	
	\begin{enumerate}
		\item[(3)] If $u:\;\IR\to E$ is a solution of
		\begin{equation}
		\label{eq:171121-1404}
		x_t = f(t,x)\quad \text{(or $g(t,x)$)}
		\end{equation}		
		with $u(t)\to e^\pm$ as $t\to\pm\infty$, then $m(e^+)<m(e^-)$, where $m(e)$
		denotes the Morse index of the equilibrium $e$.
		
		Moreover, $m(e^-) = m(e^+)$ only if $u$ is weakly hyperbolic that is, 
		\begin{equation*}
			x_t = \Diff_x f(t,u(t))x \quad \text{(or $\Diff_x g(t,u(t))x)$}
		\end{equation*}
		does not have a nontrivial bounded solution.
	\end{enumerate}
	
	On $C^k(\IR\times E, E)$, we consider a topology induced by a family $(\delta_{n,k})$ of seminorms:
	\begin{align*}
			\delta_{n,0}(f) &:= \sup_{(t,x) \in \IR\times B_n(0; E)} \norm{f(t,x)}_{E}\\
			\delta_{n,k}(f) &:= \sup_{(t,x) \in \IR\times B_n(0; E)} \norm{\Diff^k f(t,x)}\quad l\in\{1,\dots,k\}
	\end{align*}
	A neighbourhood sub-basis\footnote{$N\subset C^k(\IR\times E, E)$ is a neighbourhood of $f$ iff there are finitely many
		sub-basis elements $N_1,\dots,N_k$ which are neighborhoods of $f$ and whose intersection is a subset of $N$.} for an element $f\in C^k(\IR\times E, E)$ is formed by sets of the form
	\begin{equation*}
	  N_{n,l,\eps} := \{ g\in C^k(\IR\times E, E):\; \delta_{n,l}(f-g)<\eps\}\quad (n,l,\eps)\in\IN\times\{0,\dots,k\}\times\IR^+
	\end{equation*}
	A generic\footnote{The set of all such $h$ is
		residual i.e., a superset of a countable intersection of open and dense subsets.}
	asymptotically autonomous parameter $f \in C^k(\IR\times E, E)$ satisfying (1) and (2) 
	is good in the sense that (3) holds.
	
	
	We say that $u:\;\IR\to E$ is a solution of Morse-index $m$ if
	$u$ is an equilibrium solution of $f^\infty$ or $f^{-\infty}$
	with Morse-index $m$ or if $u$ is a solution of \eqref{eq:171121-1404}
	converging to equilibria of Morse-index $m$. Setting 
	\begin{equation*}
		M_k := \{(f',u):\; u\text{ is a solution of Morse-index } m\} \cap K
	\end{equation*}
	the family $(M_k)_{k\in\IN}$ is a Morse-decomposition of $K$ with respect
	to the skew product semiflow $\pi$ on $Y\times E$ defined by $(y,x)\pi t := (y^t, \Phi_y(t,0,x)))$,
	where $\Phi_y(t,0,x):=u(t)$ is the solution of the nonautonomous equation $\dot x = y(t,x)$ with $u(0)=x$.
	Hence, for every solution $u:\;\IR\to Y\times E$ of $\pi$, the following alternative holds:
	Either there is a $k_0\in\IN$ such that $u(t)\in M_{k_0}$ for all $t\in\IR$ or there $k<l$ in $\IN$
	such that $\alpha(u)\subset M_l$ and $\omega(u)\subset M_k$, where $\alpha$ and $\omega$ denote
	the usual $\alpha$- and $\omega$-limes sets.
	
	While $(M_k)_k$ is a Morse-decomposition, it is probably not the finest 
	Morse-decomposition\footnote{Conley \cite{conley1978isolated} points out that a finest Morse-decomposition need not exist because the would-be finest Morse-decomposition might
		consist of infinitely many Morse sets. The union of all finest Morse-sets is then called chain recurrent set.
		In our case there are finest Morse-sets, so the distinction is meaningless.}.
	For example if $f$ was only a very small perturbation of an autonomous equation,
	there would be a single Morse-set for each equilibrium of the unperturbed equation.
	Let us say that $M_e$ is the Morse-set belonging to an equilibrium $e$ of Morse-index $m(e)$.
	The homology Conley index is given by the following formula which is well-known in the autonomous case.
	\begin{equation}
		\label{eq:171122-1341}
		\Hom_q\Con(f_0,M_e) \iso \begin{cases} \IZ & q = m\\
		0 & q\neq m \end{cases}
	\end{equation}
	
	This paper is an attempt to answer the following question. Does the above equation still hold for a general asymptotically autonomous $f\in Y$ if we replace $M_e$ by a finest Morse-set?
	
	The answer is a little more complicated than in the autonomous case. 
	First of all in Section \ref{sec:trivial}, we will construct an example
	having a finest Morse set $M$ with $M(f) := \{x:\; (f,x)\in M\}\neq \emptyset$ yet $\Hom_q\Con(f_0, M) = 0$
	for all $q\in\IZ$, so there are, roughly speaking, equilibria with trivial index. Second, using homology with coefficients over a field $\IF$, we find a formula (a straightforward corollary to Theorem \ref{th:171115-1441}) for arbitrary isolated invariant sets $K\subset \omega(f_0)\times E$.
	\begin{equation}
		\label{eq:171206-1429}
		\dim \Hom^\IF_q\Con(f_0, K) \leq \min\{N_q(f^{-\infty},K(f^{-\infty})), N_q(f^\infty, K(f^\infty))\}
	\end{equation}
	Here, $N_q(f^{\pm\infty},K(f^{\pm\infty}))$ denotes the number of equilibria $e$
	of $\dot x = f^{\pm\infty}(t,x)$ with $e\in K(f^{\pm\infty})$ and Morse-index $m(e)=q$.
	
	Since the formula holds for arbitrary compact invariant sets, it is still also for a finest Morse-set $M$.
	In particular, if $M\subset M_k$, then
	\begin{equation*}
		\Hom^\IF_q\Con(f_0, M) = 0 \quad \text{for } q\neq k 
	\end{equation*}
	
	Summing up, in this nonautonomous but asymptotically autonomous setting, there are Morse-sets
	behaving similar to an equilibrium of an autonomous equation. Although there are also phenomena not encountered
	in gradient-like dynamics, every compact invariant set can be decomposed into finest Morse-sets
	of a single dimension and their connections. 
	
	The structure of a compact invariant set $K$ can be used to deduce the existence of solutions of $\dot x= f(t,x)$.
	Assume the homology Conley index $\Hom^\IF_*(f_0,K)$ is known and $\Hom^\IF_q(f_0,K)\neq 0$ for some
	$q\in\IZ$. It follows that there are at least $\dim \Hom^\IF_q\Con(f_0, K)$ distinct (full bounded) solutions of
	$\dot x = f(t,x)$ having Morse index $q$ i.e., connecting equilibria having Morse index $q$. 
	
	The computation of the Conley index is in general a non-trivial problem, but there are cases which
	are well understood. Assume that $B\in\mathcal{L}(E,E)$ has no eigenvalues whose real part is zero,
	and let $m_0$ denote the dimension of its generalized eigenspace belonging to the set of eigenvalues
	having positive real parts. If $\eps f(t,\eps^{-1} x) - Bx \to 0$ as $\eps\to 0$ uniformly on $\IR\times B_{1}(0)$,
	then \cite[Theorem 7.16]{article_naci} there exists a largest compact invariant subset $K_\infty$
	in $\Hull^+(f_0)\times E$ and 
	\begin{equation*}
		\Hom^\IF_q\Con(f_0,K_\infty) \iso \begin{cases}
			\IF & q=m_0\\
			0 & q\neq m_0
		\end{cases}
	\end{equation*}
	Therefore, there must exist at least one solution $u:\IR\to E$ of $u_t = f(t,x)$ converging as $t\to\pm\infty$
	to equilibria $e^{\pm}$ having Morse-index $m_0$.
	
	Mutatis mutandis the complete introduction applies not only to ordinary differential equations
	but to a large class of abstract semilinear parabolic equations. Narrowing down the focus to
	reaction-diffusion equations on smooth bounded domains, we can hide most of the complexity and
	formulate a relatively simple result. 
	
	Let $\Omega \subset \IR^{m_0}$, $m_0\geq 1$ be a bounded domain with smooth boundary. Assume\footnote{Strictly speaking, it is not required to assume that $f$ is continuously differentiable in $t$.} 
	\begin{enumerate}
		\item[(A1)]
		$f\in C^k(\IR\times\Omega\times\IR, \IR)$ is asymptotically autonomous meaning that there are $f^{\pm\infty}\in C^k(\Omega\times \IR,\IR)$ such that 
		\begin{equation*}
		\Diff^l \left(f(t,x,u) - f^{\pm\infty}(x,u)\right)\to 0
		\end{equation*} 
		for all $l\in\{1,\dots,k\}$ and uniformly on sets of the form $\Omega\times B_\eps(0)$, $\eps>0$.
	\end{enumerate}

	Take $X:=L^p(\Omega)$ for some $p\geq m_0$, and define an operator (using Dirichlet boundary conditions)
	\begin{align*}
		A &: W^{2,p}(\Omega)\cap W^{1,p}_0(\Omega) \to L^p(\Omega)\\
		Au &:= -\Delta u
	\end{align*}
	$A$ is a positive sectorial operator having compact resolvent. Let $a \in \rho(A)$, where $\rho(A)$ denotes the resolvent set of $A$, and suppose that 
	\begin{enumerate}
		\item[(A2)]
			\begin{equation*}
			\eps f(t,x,u) - au \to 0 \text{ as }\eps\to 0
			\end{equation*}
			uniformly on sets of the form $\IR\times\Omega\times B_\eps(0)$. The dimension
			of the eigenspace belonging to the negative part of the spectrum of $A-a$
			is referred to as the Morse-index $m(A-a)$ of $A-a$.
	\end{enumerate}
	
	Let $\mathcal{F}$ denote the set of all $f\in C^{k}(\IR\times\Omega\times\IR, \IR)$, $k\geq 1$
	satisfying (A1) and (A2). We treat $\mathcal{F}$ as a subspace of $C^k(\IR\times\Omega\times\IR,\IR)$.
	The topology is induced by a family $(\delta_{n,k})$ of seminorms:
	\begin{align*}
		\delta_{n,0}(f) &:= \sup_{(t,x,y) \in \IR\times \Omega\times B_n(0; \IR)} \abs{f(t,x,y)}\\
		\delta_{n,l}(f) &:= \sup_{(t,x,y) \in \IR\times \Omega\times B_n(0; \IR)} \abs{\Diff^k f(t,x,y)}\quad l\in\{1,\dots, k\}
	\end{align*}
	A neighbourhood sub-basis for an element $f\in C^k(\IR\times \Omega\times\IR,\IR)$ 
	is formed by sets of the form
		\begin{equation*}
		N_{n,l,\eps} := \{ g\in C^k(\IR\times \Omega\times\IR, \IR):\; \delta_{n,l}(f-g)<\eps\}\quad (n,l,\eps)\in\IN\times\{0,\dots,k\}\times\IR^+
	\end{equation*}

	Let $f\in\mathcal{F}$ be arbitrary. We write $f^{\pm\infty}$ to denote the asymptotic time limit given by (A1).
	$\hat f(t,u)(x) := f(t,x,u(t,x))$ (resp. $\hat f(u) := f(x,u)$) is the Nemitskii operator induced by $f$. Note that $\hat f(t,u)\in L^p(\Omega)$ for all $(t,u)\in \IR\times L^p(\Omega)$ (resp. $\hat f(u)\in L^p(\Omega)$ for all $u\in L^p(\Omega)$) due to (A2).
	
	\begin{theorem}
		\label{th:171208-1449}
		For a generic $f\in\mathcal{F}$ 
		there exists a solution $u:\IR\to W^{1,p}_0(\Omega)$ of 
		\begin{equation*}
			\dot u + Au = \hat f(t,u)
		\end{equation*}
		such that:
		\begin{enumerate}
			\item there are, of course, equilibria $e^\pm$ of $\dot u + Au = \hat f^{\pm\infty}(u)$ of Morse-index $m(A-a)$
			\item with $u(t)\to e^\pm$ as $t\to\pm\infty$;
			\item $u$ is hyperbolic i.e., $v_t + Av = \hat f_u(t,u(t))v$ admits an exponential dichotomy.
		\end{enumerate}
	\end{theorem}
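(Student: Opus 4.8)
The plan is to transport the parabolic problem into the abstract nonautonomous Conley index framework of this paper and then read off the three conclusions from the index computation \cite[Theorem 7.16]{article_naci}, from the bound \eqref{eq:171206-1429}, and from the generic goodness recorded above. First I would rewrite the equation as an abstract semilinear parabolic problem $\dot u = -Au + \hat f(t,u)$ on $X = L^p(\Omega)$, with phase space the fractional power space $X^\alpha := D(A^\alpha)$ for a suitable $\alpha\in(1/2,1)$, so that $X^\alpha\hookrightarrow W^{1,p}_0(\Omega)$ and, since $p\geq m_0$, also $X^\alpha\hookrightarrow C(\bar\Omega)$. Then (A1)--(A2) guarantee that $\hat f$ maps $\IR\times X^\alpha$ into $X$ with the local Lipschitz, $C^k$ and linear-growth properties needed for the induced skew-product semiflow $\pi$ on $\Hull(f)\times X^\alpha$ to be well defined, asymptotically compact and admissible in the sense of the earlier sections, so that the index theory of this paper applies; (A1) moreover makes $f$ asymptotically autonomous with limits $f^{\pm\infty}$, and one forms an initial element $f_0$ for $f$ exactly as in the introduction, with $f\in\Hull(f)\subset\omega(f_0)\subset\Hull^+(f_0)$.

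Second, I would invoke \cite[Theorem 7.16]{article_naci} with linear part $B := A - a$: assumption (A2) is precisely its asymptotic-linearity hypothesis, and since $\dot u = -(A-a)u$ has unstable dimension equal to the dimension of the eigenspace of $A-a$ for its negative spectrum, the relevant Morse index is $m(A-a)$. Hence there is a largest compact invariant set $K_\infty\subset\Hull^+(f_0)\times X^\alpha$ with
\begin{equation*}
  \Hom^\IF_q\Con(f_0,K_\infty)\iso\begin{cases}\IF & q = m(A-a)\\ 0 & q\neq m(A-a),\end{cases}
\end{equation*}
and in particular $\Hom^\IF_{m(A-a)}\Con(f_0,K_\infty)\neq 0$.

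Third comes the genericity reduction. I would verify that the $f\in\mathcal{F}$ satisfying the goodness conditions (1)--(3) form a residual set: the limit equations $\dot u + Au = \hat f^{\pm\infty}(u)$ are gradient-like, (A2) yields an a priori bound that makes their equilibria a compact set, standard Sard--Smale transversality makes these equilibria generically nondegenerate and hyperbolic (hence finitely many) and makes the intersections of their stable and unstable manifolds generically transverse -- which gives (1) and (2) -- while (3) is the generic goodness stated before the theorem, transported mutatis mutandis from $C^k(\IR\times E,E)$ to $\mathcal{F}$. Fix such an $f$. Applying \eqref{eq:171206-1429} to $K := K_\infty$ gives $\min\{N_{m(A-a)}(f^{-\infty},K_\infty(f^{-\infty})),\,N_{m(A-a)}(f^{\infty},K_\infty(f^{\infty}))\}\geq\dim\Hom^\IF_{m(A-a)}\Con(f_0,K_\infty) = 1$, so each limit equation has an equilibrium of Morse index $m(A-a)$; and via the finest Morse-set decomposition of $K_\infty$ furnished by Theorem \ref{th:171115-1441}, the non-vanishing of $\Hom^\IF_{m(A-a)}\Con(f_0,K_\infty)$ produces at least one full bounded solution $u:\IR\to W^{1,p}_0(\Omega)$ of $\dot u + Au = \hat f(t,u)$ of Morse index $m(A-a)$. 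By (1) and (2), such a $u$ converges as $t\to\pm\infty$ to single equilibria $e^\pm$ of $\dot u + Au = \hat f^{\pm\infty}(u)$ with $m(e^-) = m(e^+) = m(A-a)$; this is conclusions (1) and (2) of the theorem.

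Finally, for (3): since $f$ is good and $m(e^-) = m(e^+)$, clause (3) forces $u$ to be weakly hyperbolic, i.e. $v_t + Av = \hat f_u(t,u(t))v$ has no nontrivial bounded solution on $\IR$. Because $e^\pm$ are hyperbolic equilibria, this equation is, as $t\to\pm\infty$, asymptotic to the hyperbolic autonomous linearizations $v_t + Av = \hat{f}^{\pm\infty}_u(e^\pm)v$, so the operator $L := \partial_t + A - \hat f_u(t,u(t))$ is Fredholm between the natural bounded-function spaces, with Fredholm index $m(e^-) - m(e^+) = 0$ and kernel equal to the (trivial, by weak hyperbolicity) space of bounded solutions; hence $L$ is an isomorphism, which is exactly the statement that the variational equation along $u$ admits an exponential dichotomy on $\IR$, i.e. $u$ is hyperbolic. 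I expect the main obstacle to be this genericity step in the parabolic category -- arranging the Sard--Smale transversality and the Fredholm theory of the parabolic linearization in the $L^p$-scale so that (1)--(3), and in particular the weak-hyperbolicity clause of (3), are residual in $\mathcal{F}$ -- since it is precisely that clause which upgrades the abstractly produced solution $u$ to a hyperbolic one; by contrast the reduction to the abstract framework and the passage from weak hyperbolicity to the exponential dichotomy are routine given the machinery of the earlier sections and the classical theory of exponential dichotomies.
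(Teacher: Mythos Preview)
Your proposal is essentially correct and follows the paper's route: the paper deduces Theorem~\ref{th:171208-1449} from the abstract Theorem~\ref{th:171213-1457} after invoking Brunovsk\'y--Pol\'a\v{c}ik \cite{brunpol} for generic hyperbolicity and transversality of the limit equations, and the proof of Theorem~\ref{th:171213-1457} proceeds exactly as you outline --- the index computation via \cite[Theorem~7.16]{article_index}, generic gradient-like structure from \cite{article_generic17}, and the dimension bound of Theorem~\ref{th:171115-1441}; the passage from weak hyperbolicity to an exponential dichotomy is handled in the paper by citing Sacker--Sell \cite{sacker_sell_dich}, which is equivalent to your Fredholm-index argument.

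The one step you underspecify is the assertion that the non-vanishing of $\Hom^\IF_{m(A-a)}\Con(f_0,K_\infty)$ produces a bounded solution \emph{of $\dot u+Au=\hat f(t,u)$} (over $f$, not merely over the reverse connection $g$) \emph{of Morse index exactly $m(A-a)$}. Theorem~\ref{th:171115-1441} does not furnish the Morse decomposition --- that comes from the gradient-like hypotheses --- and even with the decomposition $(M_k)_k$ in hand, the localization to $M_{m(A-a)}$ via iterated attractor--repeller sequences only bounds $\dim\Hom^\IF_q\Con$ by the number of index-$q$ equilibria at $f^\infty$; it does not by itself single out a solution over $f$. The paper closes this with the ``past connection'' argument in the proof of Theorem~\ref{th:171208-1837}: the equilibria at $f^\infty$ that are \emph{not} reached by any solution over $f$ form a repeller whose index vanishes by the uniformity property, so the index is carried by the complementary attractor, to which Theorem~\ref{th:171115-1441} then applies. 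This argument requires $g$ to be gradient-like as well, which the paper arranges by a separate perturbation of $g$ (Corollary~\ref{co:171212-1331}) rather than as part of the genericity of $f$.
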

	
	It follows from \cite{brunpol} that there exists a residual subset $\mathcal{F}_0$ of $\mathcal{F}$
	such that for every $f\in\mathcal{F}_0$, all equilibria of the limit equations given
	by $f^{\pm\infty}$ are hyperbolic, and their stable and unstable manifolds intersect transversely
	We can now use Theorem \ref{th:171213-1457} to prove that the conclusions of Theorem \ref{th:171208-1449}
	hold for a generic $f\in \mathcal{F}_0$.

	Let us conclude with a brief overview of the following sections. After a Preliminaries section,
	we give	a short review of basic concepts and definitions of nonautonomous Conley index theory, which
	is not widely known and crucial for many proofs in this article.
	In Section \ref{sec:parameter}, we discuss appropriate choices for the parameter space $Y$
	associated with the skew product formulation of the problem. Subsequently, we construct
	an example of an equilibrium-like chain-recurrent set having trivial index and prove a uniformity
	theorem leading to \eqref{eq:171206-1429}. Section \ref{sec:abstracteq} is devoted to the abstract
	semilinear parabolic setting and the proof of an existence result for hyperbolic solutions.
	
	\begin{section}{Preliminaries}
		Almost all spaces in this paper are metric. We write $d(.,.)$ to denote
		the metric without explicitly referencing the space as long as the meaning is clear. Given
		two metric spaces $(X_1, d_1)$	and $(X_2, d_2)$ are metric spaces, their intersection $X_1\cup X_2$ is considered to be another metric space
		equipped with the metric defined by $d(x,y) := d_1(x,y) + d_2(x,y)$. 
		
		The open ball with radius $\eps$ and centre $x_0$ in a metric space $(X,d)$ 
		is denoted by $B_\eps(x;X)$ and the closed ball with the same radius and centre
		by $B_\eps[x;X]$. The notation of $X$ can be omitted when the choice of $X$
		can be deduced from the context.
		
		Let $A$ be a positive sectorial operator on a Banach space $W$, and define the 
		fractional power spaces $W^\alpha$ as the range of $A^{-\alpha}$. Fix some $\alpha\in\left]0,1\right[
		$, and let $f\in C(\IR\times W^\alpha, W)$
		be continuously differentiable in its second variable. A bounded (mild) solution
		$u:\;\IR\to X^\alpha$ of
		\begin{equation*}
			\dot u + Au = f(t,u)
		\end{equation*}
		is called {\em weakly hyperbolic} if the linearisation
		\begin{equation}
			\label{eq:171213-1650}
			\dot v + Av  = \Diff f(t,u(t))v
		\end{equation}
		does not admit a bounded solution $v:\;\IR\to W^\alpha$. A more abstract
		approach to weak hyperbolicity can be found in Section \ref{sec:simple_case} or in \cite{article_index}.
		$u$ is called {\em hyperbolic} if the evolution operator defined by \eqref{eq:171213-1650} 
		admits an exponential dichotomy over $\IR$. 
	\end{section}

	\begin{section}{A review of nonautonomous Conley index theory}
		We assume that the reader is basically familiar with the Conley index
		as can be found in Conley's monograph \cite{conley1978isolated}. If one is
		especially interested in applications to the theory of partial differential equations,
		\cite{ryb} should also be a good starting point.

		There are mainly two intertwined concepts one encounters when dealing
		with nonautonomous problems: processes, sometimes also called evolution operators,
		or skew product semiflows.

		\begin{subsection}{Evolution operators and skew product semiflows}
	 		Let $X$ be a metric space. Assuming that $\undefined\not\in X$, we
	 		introduce a symbol $\undefined$, which means "undefined". The intention is
	 		to avoid the distinction if an evolution operator is defined for a given argument
	 		or not. Define $\overline A := A\dot\cup\{\undefined\}$ whenever $A$ is a set with $\undefined\not\in A$.
	 		Note that $\overline{A}$ is merely a set, the notation does not contain any
	 		implicit assumption on the topology. 
				 		
	 		\begin{definition}
	 			\label{df:131009-1504}
	 			Let $\Delta:=\{(t,t_0)\in\IR^+\times\IR^+:\; t\geq t_0\}$.
	 			A mapping $\Phi:\; \Delta\times \overline{X}\to \overline{X}$
	 			is called an {\em evolution operator} if
				\begin{enumerate}
	 				\item $\Def(\Phi) := \{(t,t_0,x)\in\Delta\times X:\; \Phi(t,t_0,x)\neq\undefined\}$ is open
		 				in $\IR^+\times\IR^+\times X$;
					\item $\Phi$ is continuous on $\Def(\Phi)$;
					\item $\Phi(t_0,t_0, x) = x$ for all $(t_0,x)\in\IR^+\times X$;
					\item $\Phi(t_2 ,t_0,x) = \Phi(t_2,t_1,\Phi(t_1,t_0,x))$ for all $t_0\leq t_1\leq t_2$ in $\IR^+$ and $x\in X$;
				 	\item $\Phi(t,t_0,\undefined) = \undefined$ for all $t\geq t_0$ in $\IR^+$.
				 \end{enumerate}
				 			
	 			A mapping $\pi:\; \IR^+\times \overline{X}\to \overline{X}$ is called {\em semiflow}
	 			if $\tilde\Phi(t+t_0,t_0,x) := \pi(t,x)$ defines an evolution operator.
	 			To every evolution operator $\Phi$, there is an associated
	 			(skew product) semiflow $\pi$ on an extended phase space $\IR^+\times X$, defined by $(t_0,x)\pi t = (t_0+t, \Phi(t+t_0,t_0,x))$.
				 			
				A function $u:\; I\to X$ defined on a subinterval $I$ of $\IR$ is called a {\em solution of (with respect to) $\Phi$}
				if $u(t_1) = \Phi(t_1,t_0,u(t_0))$ for all $\left[t_0,t_1\right]\subset I$.
			\end{definition}
			
			Summing up, if $\Phi$ depends on $t-t_0$ instead of $t$ and $t_0$ it can be seen as a semiflow. Up to the symbol $\undefined$, one can easily derive the usual properties
			of a local semiflow.		
			
			Having a notion of semiflows and solutions of semiflows, one can define invariant sets
			respectively their half-sided equivalents.
			Note that we do not employ genuinely nonautonomous notions of invariance such as
			pullback attractors or the like. In this paper, an invariant set always refers to a semiflow, albeit usually
			a skew product semiflow.
			
	 		\begin{definition}
	 			Let $X$ be a metric space, $N\subset X$ and $\pi$ a semiflow on $X$. The set
	 			\begin{equation*}
		 			\Inv^-_\pi(N) := \{x\in N:\;\text{ there is a solution }u:\;\IR^-\to N\text{ with }u(0) = x\}
	 			\end{equation*}
	 			is called the {\em largest negatively invariant subset of $N$}.
				 			
	 			The set
	 			\begin{equation*}
		 			\Inv^+_\pi(N) := \{x\in N:\;x\pi\IR^+\subset N\}
				\end{equation*}
				is called the {\em largest positively invariant subset of $N$}.
				 			
				The set
				\begin{equation*}
		 			\Inv_\pi(N) := \{x\in N:\;\text{ there is a solution }u:\;\IR\to N\text{ with }u(0) = x\}
				\end{equation*}
				is called the {\em largest invariant subset of $N$}.
			\end{definition}
			
			In the skew product setting as opposed to the process setting, a metric
			space $Y$ is considered in addition to $X$. A skew product semiflow on $Y\times X$
			is based on a semiflow on $Y$. 
			
			Assume
			that $(t,y)\mapsto y^t$ is a semiflow on $Y$, sometimes called $t$-translation.
			The (positive) hull denotes the closure of an orbit (semiorbit) in
			the phase space. With respect to the space $Y$ and the semiflow obtained
			by translation, we have
	 		\begin{definition}
	 			For $y\in Y$ let
	 			\begin{enumerate}
	 				\item \begin{equation*}
	 				\Hull^+(y) := \cl_Y \{y^t:\;t\in\IR^+\} 
	 				\end{equation*}
	 				denote the positive hull of $y$;
	 				\item \begin{equation*}
	 				\Hull(y) := \cl_Y \{y^t:\;t\in\IR\} 
	 				\end{equation*}
	 				denote the hull of $y$. 
	 			\end{enumerate}
	 		\end{definition}
				 		
			The skew product semiflows used in this papers always refer
			to the $t$-translation as semiflow on the first component.				
			\begin{definition}
					We say that $\pi = (.^t,\Phi)$ is a skew product semiflow on $Y\times X$
					if $\Phi:\; \IR^+\times \overline{Y\times X}\to \overline{Y\times X}$ is a mapping such that 
					\begin{equation}
						\label{eq:171129-1749}
						(t,y,x)\pi t := 
						\begin{cases}
			 			(y^t, \Phi(t,y,x)) & \Phi(t,y,x)\neq\undefined\\
			 			\undefined & \text{otherwise}
			 			\end{cases}
					\end{equation}
					is a semiflow on $Y\times X$.
			\end{definition}
	
			It is a standing assumption that $X$ and $Y$ are metric spaces,
			and $\pi$ is a skew product semiflow as defined above. Evolution
			operators are easily recovered from the skew product semiflow 
			by setting $\Phi_y(t,t_0,x) := \Phi(t-t_0,y^{t_0}, x)$ for $t\geq t_0$
			and $x\in X$.			 		
			
			Conversely, every process can be understood as semiflow on
			the space $\IR^+\times X$. In case the process is given by $\Phi_{y_0}$,
			this semiflow is denoted by $\chi :=\chi_{y_0}$ and $(t,x)\chi_{y_0} s:= \Phi_{y_0}(t+s,t,x)$.

			Conley index theories can be read as theories about isolating
			neighbourhoods even to the extent that one replaces invariant sets
			by isolating neighbourhoods. 
			
			In contrast to the autonomous case, the notion of an isolating neighbourhood
			is less obvious. The definition given below refers also to the space $Y\times X$
			instead of $\Hull^+(y_0)\times X$, which is useful for continuation arguments.
			
			\begin{definition}
				\label{df:isolating-neighbourhood-1}
				Let $y_0\in Y$ and $K\subset \Hull^+(y_0)\times X$ be an invariant set. 
				A closed set $N\subset Y\times X$ (resp. $N\subset \Hull^+(y_0)\times X$)
				is called an {\em isolating neighbourhood} for $(y_0, K)$ (in $Y\times X$) (resp. in $\Hull^+(y_0)\times X$)
				provided that:
				\begin{enumerate}
					\item $K\subset \Hull^+(y_0) \times X$
					\item $K\subset \interior_{Y\times X} N$ (resp. $K\subset \interior_{\Hull^+(y_0)\times X} N$)
					\item $K$ is the largest invariant subset of $N\cap (\Hull^+(y_0)\times X)$
				\end{enumerate}				 	 							
			\end{definition}				
		\end{subsection}	
		
		\begin{subsection}{Index pairs and the homotopy index}
			The Conley index can be obtained as follows. Take an index pair
			for an appropriate isolated invariant set and collapse the exit
			set to one point. The homotopy type of the resulting topological
			space is the homotopy index.
			
	 		\begin{definition}
	 			\label{df:140606-1652}
	 			Let $X$ be a topological space, and $A,B\subset X$. Denote
	 			\begin{equation*}
		 			A/B := A/R \cup \{A\cap B\},
				\end{equation*}
				where $A/R$ is the set of equivalence classes
				with respect to the relation $R$ on $A$ which is defined by $x R y$ iff $x=y$ or $x,y\in B$.
				 			
				We consider $A/B$ as a topological space
				endowed with the quotient topology with respect to the
				canonical projection $q:\; A\to A/B$, that is, a
				set $U\subset A/B$ is open if and only if
				\begin{equation*}
					q^{-1}(U) = \bigcup_{x\in U} x
				\end{equation*}
				is open in $A$.
			\end{definition}
			
			Now the term "collapsing the exit set" can be made precise, but
			we still do not know what an index pair is supposed to be.
			
		 	\begin{definition}
		 		\label{df:basic_index_pair}
		 		A pair $(N_1, N_2)$ is called a {\em (basic) index pair} relative to a
		 		semiflow $\chi$ in $\IR^+\times X$ if
		 		\begin{enumerate}
		 			\item[(IP1)] $N_2\subset N_1\subset \IR^+\times X$, $N_1$ and $N_2$ are
			 			closed in $\IR^+\times X$
		 			\item[(IP2)] If $x\in N_1$ and $x\chi t\not\in N_1$ for some $t\in\IR^+$, 
			 			then $x\chi s\in N_2$ for some $s\in\left[0,t\right]$;
		 			\item[(IP3)] If $x\in N_2$ and $x\chi t\not\in N_2$ for some $t\in\IR^+$, then
			 			$x\chi s\in (\IR^+ \times X)\setminus N_1$ for some $s\in\left[0,t\right]$.
		 		\end{enumerate}
		 	\end{definition}
		 	
			Index pairs need to be associated with invariant sets in order to define
			an index. We have already defined isolating neighbourhoods, but appropriate
			compactness assumptions is still missing. This is a non-problem in the finite-dimensional
			case, but crucial in an infinite-dimensional setting.
			
	 		\begin{definition}
				A closed set $M\subset Y\times X$ is called {\em strongly admissible} (or {\em asymptotically compact})
				provided the following holds:
				 			
	 			Whenever $(y_n,x_n)$ is a sequence in $M$ and $(t_n)_n$ is a sequence
	 			in $\IR^+$ such that $(y_n,x_n)\pi\left[0,t_n\right]\subset M$, then
	 			the sequence $(y_n,x_n)\pi t_n$ has a convergent subsequence.
	 		\end{definition}
	 		
	 		\begin{definition}
	 			A closed set $M\subset Y\times X$ is called {\em skew-admissible} provided
	 			the following holds: whenever $(y_n, x_n)_n$ in $N$ and $(t_n)_n$
	 			in $\IR^+$ are sequences such that $t_n\to\infty$, $y^{t_n}_n\to y_0$ in $Y$
	 			and $(y_n,x_n)\pi \left[0,t_n\right]\subset N$, the sequence $\Phi(t_n,y_n,x_n)$
	 			has a convergent subsequence.
	 		\end{definition}
	 		
	 		Suppose $y_0\in Y$ is a so-called initial element and $K\subset \Hull^+(y_0)\times X$
	 		an invariant set admitting a strongly admissible isolating neighbourhood. Then, there
	 		exists an index pair $(N_1, N_2)$ for $(y_0, K)$ as defined below. Moreover, the homotopy
	 		type of the pointed space $(N_1/N_2, N_2)$ is independent of the choice of an index pair. 
	 		Hence, the homotopy index is well-defined.
	 		
	 		\begin{definition}
	 			\label{df:140128-1503}
	 			Let $y_0\in Y$ and $(N_1, N_2)$ be a basic index pair in $\IR^+\times X$ relative to $\chi_{y_0}$. Define
	 			$r:=r_{y_0}:\; \IR^+\times X\to \Hull^+(y_0)\times X$ by $r_{y_0}(t,x) := (y^t_0,x)$.
	 			 		
	 			Let $K\subset \omega(y_0)\times X$ be an (isolated) invariant
	 			set. We say that $(N_1, N_2)$ is a (strongly admissible) index pair\footnote{Every index pair in the
	 			sense of Definition \ref{df:140128-1503} is assumed to be strongly admissible.} for $(y_0, K)$ if:
	 			\begin{enumerate}
	 				\item[(IP4)] there is a strongly admissible isolating neighbourhood $N$ of $K$ in $\Hull^+(y_0)\times X$
	 					such that $N_1\setminus N_2 \subset r^{-1}(N)$;
	 				\item[(IP5)] there is a neighbourhood $W$ of $K$ in $\Hull^+(y_0)\times X$
	 					such that $r^{-1}(W)\subset N_1\setminus N_2$.
	 			\end{enumerate}
	 		\end{definition}
	 		
	 		A frequently used construction enlarges the exit set of a given index pair $(N_1, N_2)$.
 		 	\begin{definition}
 		 		Let $(N_1, N_2)$ be an index pair in $\IR^+\times X$ (relative to the semiflow
		 		$\chi$ on $\IR^+\times X$). For $T\in\IR^+$, we set
 		 		\begin{equation*}
	 		 		N^{-T}_2 := N^{-T}_2(N_1) := \{(t,x)\in N_1:\; \exists s\leq T\; (t,x)\chi s\in N_2\}.
 		 		\end{equation*}
 		 	\end{definition}
		\end{subsection}		
			 		
		\begin{subsection}{The categorial Conley index and attractor-repeller decompositions}
  			A {\em connected simple system} is a small category such that given
  			a pair $(A,B)$ of objects, there is exactly one morphism $A\to B$.
  			
  			Let $y_0\in Y_c$ and $K\subset \Hull^+(y_0)\times X$
  			be an isolated invariant set for which there is a strongly admissible
  			isolating neighbourhood.	The categorial Conley index $\Con(y_0, K)$ (as defined in \cite{article_naci_1}) is a
  			subcategory of the homotopy category of pointed spaces and a connected simple system. 
  			Its objects are pointed spaces $(N_1/N_2, N_2)$, where $(N_1, N_2)$ is an index pair for $(y_0, K)$. Roughly speaking, one can think
  			of an index pair with collapsed exit set as a representative of the index. All of the
  			representatives are isomorphic in the homotopy category of pointed spaces.
  			
  			Let $(\Hom_*,\partial) := (\Hom^{\Gamma}_*, \partial)$ denote a homology theory with compact supports \cite{spanier}
  			and coefficients in a module $\Gamma$. In this paper, we often assume that $\Gamma = \IF$,
  			where the latter denotes an arbitrary field.
  			Recall that $\Hom_*$ is a covariant functor from the category of topological
  			pairs to the category of graded abelian groups (or modules). 
  			
  			Define the {\em homology Conley index} $\Hom_*\Con(y_0,K)$ to be the following
  			connected simple system: $\Hom_*(N_1/N_2, \{N_2\})$ is an object
  			whenever
  			$(N_1/N_2, N_2)$ is an object of $\Con(y_0, K)$. The morphisms of $\Hom_*\Con(y_0, K)$
  			are obtained analogously from the morphisms of $\Con(y_0, K)$. 
  			Note that we also write $\Hom_*(A,a_0) := \Hom_*(A,\{a_0\})$ provided the meaning is clear.
  			
  			Previously, we have defined invariance relying only on the skew
  			product formulation. The same approach will be used for attractor-repeller
  			decomposition. First of all given a solution $u:\;\IR\to \Hull^+(y_0)\times X$ of $\pi$,
  			its $\alpha$-- and $\omega$-limes sets are defined as usual.
	 		\begin{align*}
  		 		\alpha(u) &:= \bigcap_{t\in\IR^-} \cl_{\Hull^+(y_0)\times X} u(\left]-\infty,t\right])\\
  		 		\omega(u) &:= \bigcap_{t\in\IR^+} \cl_{\Hull^+(y_0)\times X} u(\left[t,\infty\right[)
  	 		\end{align*}
  				 		
  	 		Based on these definitions, the notion of an attractor-repeller decomposition
  	 		can be made precise.
  				 		
  	 		\begin{definition}
  	 			Let $y_0\in Y$ and $K\subset \Hull^+(y_0)\times X$ be an isolated
  	 			invariant set. $(A,R)$ is an {\em attractor-repeller decomposition} of 
  	 			$K$ if $A,R$ are disjoint isolated invariant subsets of $K$ and for every solution $u:\;\IR\to K$
  	 			one of the following alternatives holds true.
  				\begin{enumerate}
  	 				\item $u(\IR)\subset A$
  	 				\item $u(\IR)\subset R$
  	 				\item $\alpha(u)\subset R$ and $\omega(u)\subset A$
  	 			\end{enumerate}
  	 			We also say that $(y_0, K, A, R)$ is an attractor-repeller decomposition.
  	 		\end{definition}
  				 		
  			Given an attractor-repeller decomposition $(y_0, K, A, R)$, there is a long exact \cite{article_naci_1}
  			sequence
  			\begin{equation*}
  			\xymatrix@1{
  				\ar[r] & \Hom_*\Con(y_0,A) \ar[r] & \Hom_*\Con(y_0,K) \ar[r]
  				& \Hom_*\Con(y_0,R) \ar[r]^-{\partial} & \Hom_{*-1}\Con(y_0,A)
  				\ar[r]&
  			}
  			\end{equation*}
  			where $\partial$ denotes the connecting homomorphism. The above sequence is called
  			the {\em attractor-repeller sequence}. 
		\end{subsection}
					
	\end{section}

	\begin{section}{The parameter space}
		\label{sec:parameter}
		With respect to (nonautonomous) Conley index calculations, a fixed
		parameter space $Y$ is used throughout this paper. Suppose that $E_0$
		is a metric vector space endowed with an invariant metric $d$ i.e., $d(x,y) = d(x-y,0)$ for all $x,y\in E_0$.
		Let $(E^i_0)_{i\in I}$ be a family of subsets such that 
		\begin{equation*}
			\bigcup_{i\in I} E^i_0 = E_0
		\end{equation*}
		
		Define $Y$ to be the subspace of $C(\IR, E_0)$ consisting of all $y:\;\IR\to E_0$ such that
		$y$ is continuous and $y(\IR)\subset E^i_0$ for some $i\in I$. $Y$ is endowed
		with a metric $d:=d_Y$ generated by a family $(\delta_n)_{n\in\IN}$ of seminorms given by
		\begin{equation*}
			\delta_n(f) := \delta_n(f) := \sup_{\abs{t}\leq n} \norm{f(t)}_{E_0}
		\end{equation*}
		and 
		\begin{equation}
			\label{eq:171129-1327}
			d_Y(y,y') := d_Y(y-y',0) := \sum^\infty_{n=1} 2^{-n} \frac{\delta_n(y)}{1+\delta_n(y)}
		\end{equation}
		The metric $d_Y$ induces the compact open topology on $Y$. None of
		the proofs relies on the particular choice of a metric $d_Y$ but only on the topology. 
		
		The translation $(t,y)\mapsto y^t$ on $\IR\times Y$ is defined by 
		$y^t(s) := y(t+s)$. 
		
		Before dealing with the particular problem of asymptotically autonomous equations,
		we will give a short overview of possible realizations of $E_0$ and $Y$ in the case ordinary differential
		equations and semilinear parabolic equations.
		
		\begin{subsection}{Ordinary differential equations}
			Let $E$ be a finite-dimensional real normed space, e.g. $E=\IR^N$ for some $N\in\IN$.
			By an ordinary differential equation we mean a differential equation
			\begin{equation*}
				\dot x = f(t,x)
			\end{equation*}
			where $f$ is assumed to be continuous in $t$ and Lipschitz-continuous in $x$
			uniformly on bounded sets.
			
			Let $E_0$ denote the subspace of all $g\in C(E, E)$ which are Lipschitz-continuous
			uniformly on bounded sets equipped with a metric induced by a family $(\delta_n)_{n\in\IN}$
			of seminorms given by
			\begin{equation*}
				\delta_n(g) := \sup_{\abs{x}\leq n} \norm{g(x)}_{E}
			\end{equation*}

			Let $I$ denote the set of all $L:\; \IR^+ \to \IR^+$ and 
			\begin{equation*}
				E_L := \{g\in E_0 :\; \norm{g(x) - g(x')} \leq L(C)\norm{x-x'}\text{ for all } x,x'\text{ with }\norm{x},\norm{x'}\leq C\}.
			\end{equation*}			
			
			We have $y\in Y$ if and only if the mapping $f:\;\IR\times E\to E$, $f(t,x) := y(t)(x)$,
			is continuous and Lipschitz-continuous uniformly on sets of the form $\IR\times B$ where $B\subset E$ is 
			an arbitrary bounded set.
			
			The skew product semiflow $\pi$ is determined by its cocycle $\Phi_y$
			and \eqref{eq:171129-1749}, using $X:=E$ as phase space. Let $\left[t_0,t\right]\subset \IR$
			be an interval. We set $\Phi_y(t,t_0,x_0) = x_1$ if $u:\;\left[t_0,t\right]\to E$
			is a solution of 
			\begin{equation*}
				\dot x = y(t)(x)
			\end{equation*}
			with $u(t_0) = x_0$ and $u(t) = x_1$.
			
			There are two relevant notions of asymptotic compactness. A closed set $N\subset \Hull^+(y_0)\times X$, $y_0\in Y$
			is strongly admissible if $\Hull^+(y_0)$ is compact and $\sup\{\norm{x}:\; (y,x)\in N\}<\infty$.
			$N\subset Y\times X$ is strongly skew-admissible if it is closed and $\sup\{\norm{x}:\; (y,x)\in N\}<\infty$.
			
			In this article, the compactness of $\Hull^+(y_0)$ follows directly from its construction. It
			is not necessary to impose additional compactness conditions on $y_0$ like e.g. uniform continuity.
		\end{subsection}
		
		\begin{subsection}{Semilinear parabolic equations}
			\label{sec:parameters_infinite}
			The last remark of the previous section on ordinary differential equations concerning the
			compactness of $\Hull^+(y_0)$ applies
			to semilinear parabolic equations as well and allows us to formulate the results in an 
			abstract manner as opposed to the approach in \cite{article_index}.
			
			Let $W$ be a Banach space, and $A$ be a positive, sectorial operator on $W$ having compact resolvent. 
			$A$	gives rise to an analytic semigroup denoted by $\e^{-At}$. Let $W^\alpha$ denote
			the $\alpha$-th fractional power space with respect to $A$, equipped with the norm
			$\norm{x}_\alpha := \norm{A^\alpha x}$ (see e.g. \cite{sellyou}, \cite{henry}, \cite{pazy}).
			
			Fix an $\alpha\in\left]0,1\right[$ and a phase space $E:=W^\alpha$. Let $E_0 \subset C(E,W)$
			denote the subspace of all continuous functions mapping bounded sets of $E$ into bounded sets of $W$.
			$E_0$ is equipped with a metric induced by a family $(\delta_n)_n$ of seminorms given by
			\begin{equation*}
				\delta_n(g) := \sup\{\norm{g(x)}:\; x\in E\text{ with } \norm{x}_\alpha\leq n\}
			\end{equation*}
			
			Let $I$ denote the set of all $L:\; \IR^+ \to \IR^+$ and 
			\begin{equation*}
				\begin{split}
				E_L := \{g\in E_0 :\; &\norm{g(x) - g(x')} \leq L(C)\norm{x-x'}_\alpha \\
				&\text{ for all } x,x'\in E\text{ with }\norm{x}_\alpha,\norm{x'}_\alpha\leq C\}
				\end{split}
			\end{equation*}			
					
			We have $y\in Y$ if and only if the mapping $f:\;\IR\times E\to W$, $f(t,x) := y(t)(x)$,
			is continuous and Lipschitz-continuous uniformly on sets of the form $\IR\times B$ where $B\subset E$ is 
			an arbitrary bounded set.
			
			For $(y,x)\in Y\times E$ let $t\mapsto \Phi_y(t,t_0,x)$, $t\geq t_0$ denote the maximally defined
			mild solution of 
			\begin{equation*}
				x_t + Ax = y(t)(x)
			\end{equation*}
			It follows from \cite[Theorem 47.5]{sellyou} that $\pi$ defined by \eqref{eq:171129-1749} 
			using $X := E$ is indeed a semiflow.
			
			Furthermore, a closed set $N\subset Y\times E$ is strongly skew-admissible if 
			\begin{equation*}
				\sup \{\norm{x}_\alpha:\; (y,x)\in N\} < \infty
			\end{equation*}
		\end{subsection}
		
		\begin{subsection}{An initial parameter $y_0$ for asymptotically autonomous problems}
			We say that a parameter $f\in Y$ is autonomous if $f^t=f$ for all $t\in\IR$
			and asymptotically autonomous if $f^t \to f^{\pm\infty}$ as $t\to\pm\infty$
			for some autonomous $f^{\pm\infty}\in Y$.
			
			We do not require that $f^{-\infty} = f^\infty$. Let $(f,g)\in Y\times Y$ be
			a pair of asymptotically autonomous parameters with $f^t\to f^{\pm\infty}$
			and $g^t\to f^{\mp\infty}$ as $t\to\pm\infty$. We also say that $(f,g)$ is
			an {\em asymptotically autonomous cycle}.
			
			Take $C_0\subset \IC$ to be the annulus	
			\begin{equation*}
				C_0 := \left\{r \mathrm{e}^{\imag\varphi}:\; (r,\varphi)\in \left[1/2, 1\right]\times \IR\right\}
			\end{equation*}
			We will consider a fixed initial element $z_0 = \frac{1}{2}$. The differential equation
			\begin{align}
				\label{eq:171215-1809a}
				\dot r &= 1-r\\
				\label{eq:171215-1809b}
				\dot \varphi &= (1-r) + r\cos(\varphi)
			\end{align}
			defines a semiflow on $C_0$. The semiflow is denoted by $z^t$ for $z\in C_0$ and referred
			to as the usual translation. It is easy to see that $\omega(c_0) = \{z\in \IC:\; \abs{z}=1\}$ i.e.,
			the complex unit sphere.
			
			A mapping $F:\;C_0\to E_0$ is obtained by setting
			\begin{equation*}
				F(r e^{\imag\varphi}) := \begin{cases}
					f^{-\infty}(0) & \varphi = -\frac{\pi}{2}\\
					f(t) & \varphi = \Phi(t;0) \\
					f^\infty(0) & \varphi = \frac{\pi}{2}\\
					g(t) & \varphi = \Phi(t;\pi) \\
				\end{cases}
			\end{equation*}
			where $\Phi(t;\varphi_0)$ denotes the solution of $\dot\varphi = \cos(\varphi)$ with 
			$\Phi(0;\varphi_0) = \varphi_0$.
			
			We have $\omega(F(\omega(z_0))) = \Hull(f)\cup \Hull(g)$. The next step
			is to modify $F$ such that it is constant in a small sector around
			the angles $-\pi/2$ and $\pi/2$.
			
			\begin{equation*}
				F^\rho(r e^{\imag \varphi}) := \begin{cases}
					F(-\pi/2) & \abs{\varphi + \pi/2} \leq \rho\\
					F\left(\varphi \frac{\pi}{\pi-2\rho}\right) & -\pi/2+\rho < \varphi < \pi/2-\rho\\
					F(\pi/2) & \abs{\varphi - \pi/2} \leq \rho\\
					F\left(\pi + (\varphi-\pi) \frac{\pi}{\pi-2\rho}\right) & \pi/2+\rho < \varphi < 3\pi/2-\rho\\
				\end{cases}
			\end{equation*}
			Note that $F^0 = F$.
			
			Define $\hat F^\rho:\;C_0\to Y$ by $\hat F^\rho(z)(t) := F^\rho(z^t)$. We have for all $s\in\IR$
			$\hat F^\rho(z)^t(s) = \hat F^\rho(z)(t+s) = \hat F^\rho(z^{t+s}) = \hat F^\rho(z^t)(s)$,
			so $\hat F^\rho(z)^t = \hat F^\rho(z^t)$ for all $z\in C_0$ and all $t\in\IR$. The initial
			element $y^\rho_0 := \hat F^\rho(z_0)$ depends on $f$ and $g$ and is thus also denoted by $y^\rho_0(f,g)$.
			We also write $y_0$ or $y_0(f,g)$ instead of $y^\rho_0$ or $y^\rho_0(f,g)$ if $\rho=0$.
			
			The following lemma is formulated for the categorial index simply because this leads to a
			general statement and implies equality of the homotopy index, the homology index and other
			possibly derived indices.
			\begin{lemma}
				\label{le:171115-1506}
				Let $(f,g)$ be an asymptotically autonomous cycle and
				$N\subset Y\times X$ a strongly skew-admissible isolating neighbourhood
				for $K\subset \Hull^+(y_0)\times X$.
				
				Then for all $\rho>0$ sufficiently small:
				\begin{enumerate}
					\item \label{item:171114-1} $N$ is an isolating neighbourhood for $K_\rho := \Inv(N)\cap (\Hull^+(y^\rho_0)\times X)$.
					\item $\Con(y^\rho_0, K) \iso \Con(y_0, K)$
				\end{enumerate}
			\end{lemma}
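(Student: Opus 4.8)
The plan is to treat $\rho$ as a continuation parameter and show that the family $y^\rho_0 = \hat F^\rho(z_0)$, $\rho\in[0,\rho_1]$, gives a continuous deformation of initial elements for which the isolating neighbourhood $N$ persists. First I would record the basic structural facts about $\hat F^\rho$: since $F^\rho$ is constant in the sectors $|\varphi\pm\pi/2|\le\rho$ and agrees with a reparametrised $F$ in between, we have $\omega\bigl(\hat F^\rho(z_0)\bigr)=\Hull(f)\cup\Hull(g)$ for \emph{every} $\rho\ge 0$ (reparametrising the angular variable does not change the closure of the orbit, and $f^{\pm\infty}$ are already fixed points of translation). In particular $\Hull^+(y^\rho_0)$ is compact for all $\rho$, so $y^\rho_0$ is a genuine initial element and the categorial index $\Con(y^\rho_0,\cdot)$ is defined. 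Moreover $\rho\mapsto y^\rho_0$ is continuous into $Y$ at $\rho=0$ (the reparametrisation $\varphi\mapsto \varphi\pi/(\pi-2\rho)$ converges to the identity uniformly on compact $\varphi$-intervals, and $F$ is uniformly continuous on $C_0$ since $\Phi(\cdot;\varphi_0)$ is), and more generally $\hat F^\rho(z)\to\hat F^{\rho'}(z)$ as $\rho\to\rho'$ uniformly in $z\in C_0$.

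For part (1), I would argue by contradiction using strong skew-admissibility. Suppose along some $\rho_n\downarrow 0$ the set $N$ fails to be an isolating neighbourhood for $K_{\rho_n}=\Inv(N)\cap(\Hull^+(y^{\rho_n}_0)\times X)$; since $N$ is closed and $K_{\rho_n}\subset\interior N$ fails only if $K_{\rho_n}$ meets $\partial N$, there are solutions $u_n:\IR\to N$ of the skew product semiflow over parameters $z_n\in\Hull^+(y^{\rho_n}_0)$ with $u_n(0)\in\partial N$. Using that $N$ is strongly skew-admissible and $z_n\to z_0$-translates stay in the compact $\Hull(f)\cup\Hull(g)$ in the limit, extract convergent subsequences of $u_n$ on compact time intervals (a diagonal argument) to obtain a limiting solution $u:\IR\to N$ lying over $\Hull^+(y_0)\times X$ with $u(0)\in\partial N$; this solution lies in $\Inv(N)\cap(\Hull^+(y_0)\times X)=K$, contradicting that $N$ is an isolating neighbourhood for $K$. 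The same admissibility argument, run the other way, shows $K_\rho\subset\interior N$ for small $\rho$. Hence (1) holds for all sufficiently small $\rho>0$, and in particular $K_\rho$ is an isolated invariant set admitting the strongly admissible isolating neighbourhood $N\cap(\Hull^+(y^\rho_0)\times X)$.

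For part (2), the continuation principle for the nonautonomous categorial Conley index (as developed in \cite{article_naci_1}) applies: having a single closed set $N$ that is a strongly (skew-)admissible isolating neighbourhood for $(y^\rho_0,K_\rho)$ along the continuous path $\rho\in[0,\rho_1]$ yields a canonical isomorphism $\Con(y^\rho_0,K_\rho)\iso\Con(y_0,K_0)$ of connected simple systems. It remains to identify $K_\rho$ with $K$ in the statement; but the statement's $\Con(y^\rho_0,K)$ is by convention $\Con(y^\rho_0,\Inv(N)\cap(\Hull^+(y^\rho_0)\times X))=\Con(y^\rho_0,K_\rho)$, and for $\rho=0$ this is $\Con(y_0,K)$ since $K=\Inv(N)\cap(\Hull^+(y_0)\times X)$ by hypothesis. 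Chaining the isomorphisms gives $\Con(y^\rho_0,K)\iso\Con(y_0,K)$.

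The main obstacle is the admissibility/compactness step underlying part (1): one must be careful that the limiting parameter of a sequence $z_n\in\Hull^+(y^{\rho_n}_0)$ with $\rho_n\to 0$ really lands in $\Hull^+(y_0)$ and not in some spurious set produced by the shrinking sectors, and that the passage to the limit of solutions $u_n$ is justified by strong skew-admissibility rather than assumed. In the semilinear parabolic case this is where the $W^\alpha$-bounds from strong skew-admissibility together with the smoothing of $\e^{-At}$ are needed to get the convergent subsequence of $\Phi(t_n,z_n,x_n)$; in the ODE case it is immediate from the uniform bound $\sup\{\|x\|:(y,x)\in N\}<\infty$ and Arzelà–Ascoli. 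Everything else is a routine reparametrisation estimate plus an invocation of the abstract continuation theorem.
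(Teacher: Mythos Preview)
Your proposal is correct and follows essentially the same route as the paper: both arguments rest on the uniform estimate $\sup_{t\in\IR^+} d_Y\bigl((y^\rho_0)^t,(y_0)^t\bigr)\to 0$, obtained from $F^\rho\to F^0$ uniformly on the compact set $C_0$ together with $(y^\rho_0)^t=\hat F^\rho(z_0^t)$, and then invoke isolation--persistence and the continuation theorem for the nonautonomous index. The only cosmetic differences are that the paper dispatches part~(1) by citing Theorem~5.6 of \cite{article_naci} (your contradiction argument via strong skew-admissibility is precisely the content of that result), and for part~(2) the relevant continuation statement is Theorem~5.12 of \cite{article_naci} rather than \cite{article_naci_1}.
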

	
			\begin{proof}
				First of all, we are going to prove that $d(\left(y^\rho_0\right)^t, \Hull^+(y^\rho_0))\to 0$
				as $\rho\to 0$ and $t\to\infty$. This follows in particular if we show that
				\begin{equation*}
					\sup_{t\in\IR^+} d\Bigl(\bigl(\underbrace{\hat F^\rho(z_0)}_{=y^\rho_0}\bigr)^t, \bigl(\hat F^0(z_0)\bigr)^t\Bigr) \to 0
				\end{equation*}
				as $\rho\to 0$. We have
				\begin{equation*}
					\left(\hat F^\rho(z_0)\right)^t = \hat F^\rho(z^t_0)
				\end{equation*}
				so it is sufficient to note that $F^\rho\to F^0$ as $\rho\to 0$ uniformly on $C_0$ which is compact.
				
				\begin{enumerate}
					\item This follows immediately from Theorem 5.6 in \cite{article_naci}.
					\item It follows from (\ref{item:171114-1}) and the remarks at the beginning
					of this proof that $\gamma:\left[0,1\right]\to Y\times 2^{Y\times X}$, $\rho\mapsto (y^\rho_0, K_\rho)$ is continuous in the sense required by Theorem 5.12 in \cite{article_naci}, whence
					it follows that the Conley indices are isomorphic.
				\end{enumerate}
			\end{proof}
		\end{subsection}
	\end{section}	 
	
	\begin{section}{Simple weakly hyperbolic "equilibria"}
		\label{sec:simple_case}
		Hyperbolic equilibria play an important role among all equilibria.
		It is certainly not a new question what hyperbolicity could mean
		in a nonautonomous context. Often the answer depends on the
		context, and therefore exponential dichotomies have proven to 
		be a useful concept.
		
		In the context of Conley index theory, built around the
		notion of isolated invariant sets, it seems unnatural to use growth
		estimates to characterize hyperbolicity. We use instead
		the notion of {\em weak hyperbolicity} which has been introduced
		in \cite{article_index}. A very similar notion of weak hyperbolicity
		has been used earlier in \cite{sacker1994dichotomies}. The results
		in \cite{sacker1994dichotomies} also imply that hyperbolicity and
		weak hyperbolicity are often equivalent.
		
		A parameter $f\in Y$ is called linear if the evolution operator
		$\Phi_f$ is linear. A linear parameter $f$ is called
		{\em weakly hyperbolic} provided every strongly admissible set $N\subset \Hull(f)\times E$
		is invariant. 
		
		Let $u$ be a bounded solution of $\Phi_f$ for some $f\in Y$. Since
		$f(t)$ is continuously differentiable for all $t\in \IR$, we can
		define $d(f,u)\in Y$ by $d(f,u)(t)(x) := D[f(t)](u(t))x$. We say
		that $u$ is a weakly hyperbolic solution of $\Phi_f$ or that $(f,u)$ is weakly hyperbolic
		if $d(f,u)$	is weakly hyperbolic as defined previously in the linear case.
		
		An invariant set $K\subset Y\times X$ is called weakly hyperbolic if
		$(f,u)$ is weakly hyperbolic whenever $u$ is a solution of $\Phi_f$
		such that $(f^t,u(t))\in K$ for all $t\in\IR$.
				
		Invariant sets in our context
		are (trivial) bundles over the nonautonomous nonlinearities. The projection
		associated with the bundle structure will be visualized by vertical arrows.
		\begin{equation}
		\label{eq:171123-1454}
		\xymatrix@C=2cm{
			e^- \ar[d] \ar@/_5mm/[r]^-u & \ar@/_5mm/[l]^-v e^+ \ar[d]\\
			f^{-\infty} \ar@/_5mm/[r]^-{f} & \ar@/_5mm/[l]^-g f^\infty
		}
		\end{equation}
		The above diagram reads as follows:
		$f^t \to f^{\pm\infty}$ as $t\to\pm\infty$ and $g^t\to f^{\mp\infty}$ as $t\to\pm\infty$.
		Furthermore, $u$ (resp. $v$) is a solution of $\dot x = f(t,x)$ (resp. $\dot x = g(t,x)$)
		with $u(t)\to e^\pm$ (resp. $v(t)\to e^\mp$) as $t\to\infty$. Together,
		the diagram describes the invariant set
		\begin{equation*}
		K = \{(f^{-\infty},e^-), (f^\infty, e^+)\} \cup \{(f^t,u(t)):\;t\in\IR\} \cup \{g^t,v(t):\; t\in\IR\}
		\end{equation*}
		
		Suppose $K\subset Y\times E$ is an invariant set having the structure of
		\eqref{eq:171123-1454}. By using the techniques of Section \ref{sec:parameter},
		we can obtain an initial element $y_0\in Y$ such that $\omega(y_0) = \Hull(f)\cup \Hull(g)$
		as required by $K$. Therefore $\Con(y_0, K)$ and consequently $\Hom_*\Con(y_0, K)$ are
		defined. 
		
		For every $(f,u)\in K$, we set $(f\ominus u)(t)(x) := f(t)(x + u(t))$, so
		\begin{equation*}
			K_0 := \{(f\ominus u,0) :\; (f,u)\in K\}
		\end{equation*}
		and $K$ are related by a homeomorphism.
		
		Therefore, we can assume without loss of generality that $K=K_0$,
		so Theorem 7.13 in \cite{article_index} is applicable, whence it follows 
		that form some $m\in \IZ$
		\begin{equation*}
			\Hom^\IF_q\Con(y_0, K) \iso \begin{cases} \IF & q = m\\ 0 & q\neq m \end{cases}
		\end{equation*}
		
		At the moment, we can only guess that $m$ must agree with the Morse
		indices of the equilibria $e^+$ and $e^-$. 
		
		We will not elaborate on 
		this as it follows immediately from Theorem \ref{th:171115-1441} proved below
		and formulas for the (autonomous) Conley index of a hyperbolic equilibrium
		(see e.g. Theorem 11.1 in \cite{ryb}).
	\end{section}
	
	\begin{section}{"Equilibria" with trivial index}
		\label{sec:trivial}
		There is a simple formula \eqref{eq:171122-1341} 
		for the Conley index of an isolated equilibrium. Replacing equilibria
		by finest Morse sets sets as explained in the introduction, the homology Conley index need
		no longer be non-trivial. This will be shown by constructing an example
		of a non-trivial, weakly hyperbolic chain-recurrent set with trivial homology index.
	
		Let us begin with a remark on the minimum
		complexity required to produce such a "silent" solution. The following
		invariant set certainly has a trivial homology index because the uniformity 
		property of the nonautonomous Conley index for a non-trivial index would imply
		the existence of a full solution $v:\;\IR\to E$ of $\dot x = g(t,x)$.
		\begin{equation*}
		\xymatrix@C=2cm{
			e^- \ar[d] \ar@/_5mm/[r]^-u &  e^+ \ar[d]\\
			f^{-\infty} \ar@/_5mm/[r]^-{f} & \ar@/_5mm/[l]^-g f^\infty
		}
		\end{equation*}
		
		Our second example constitutes a nonautonomous equivalent of a homoclinic
		solution, namely two connections from $(f^{-\infty}, e^-)$ to $(f^\infty, e^+)$.
		This resembles the double root of a real function.
		\begin{equation}
		\label{eq:171123-1318}
		\xymatrix@C=2cm@R=15mm{
			e^- \ar[d] \ar@/_5mm/[r]^-{u_1} \ar@/_10mm/[r]^-{u_2}&  \ar@/_5mm/[l]^-v  e^+ \ar[d]\\
			f^{-\infty} \ar@/_5mm/[r]^-{f} \ar@/_10mm/[r]^-{f}& \ar@/_5mm/[l]^-g f^\infty
		}
		\end{equation}

		Taking $E=\IR^2$ and $Y$ as above, $E_0$ denotes the set of all continuously differentiable
		mappings $\IR^2\to \IR^2$ equipped with the maximum norm. We can identify an element $y\in Y$
		with a function $f=f(y):\; \IR\times \IR^2 \to \IR^2$ by setting $f(t,x) = y(t)(x)$.
		
		Let $e^-_c = (0,-c)$, $e^+ = (0,0)$, $c\geq 0$, $\eps>0$ and
		\begin{align*}
			f^{-\infty}_c(t,(x_1,x_2)) &:= (x_1, -x_2+3x^2_1-c)\\
			f^{\infty}_c(t,(x_1,x_2)) &:= (-x_1 + \eps x_2, x_2)
		\end{align*}

		The global stable and unstable submanifolds are not hard to find. With
		respect to $(\dot x_1, \dot x_2) = f^{-\infty}(0,(x_1, x_2))$ we obtain
		\begin{align*}
			W^u_c(-\infty, e^-_c) &= \{(h, h^2-c):\; h\in\IR\}\\
			W^s_c(-\infty, e^-_c) &= \{0\} \times \IR\\
		\end{align*}
		and with respect to $(\dot x_1, \dot x_2) = f^\infty_c(0,(x_1, x_2))$
		\begin{align*}
			W^u_c(\infty, e^+_c) &= \{ (\eps/2\,h, h) :\; h\in\IR\}\\
			W^s_c(\infty, e^+_c) &= \IR\times\{0\}\\
		\end{align*}
		
		Let $\phi:\IR\to \left[0,1\right]$ be continuously differentiable
		with 
		\begin{equation*}
			\phi(t) = \begin{cases} 0 &t\leq 0\\
			t & t\geq 1
			\end{cases}
		\end{equation*}
		$\phi$ serves as auxiliary function to define $f$ and $g$.
		\begin{align*}
			f_c(t,x) &:= \dot \phi(-t) f^{-\infty}_c(0,x) + \dot \phi(t) f^{\infty}_c(0,x)\\
			g_c(t,x) &:= \dot \phi(-t) f^\infty_c(0,x) + \dot\phi(t) f^{-\infty}_c(0,x)
		\end{align*}
		
		Let $u:\;\IR\to \IR^2$ be a bounded solution of $\dot x = f_c(t,x)$.
		It follows that $u(t) \to e^\pm_c$ as $t\to\pm\infty$. Moreover, one has
		$u(0) \in W^u_c(-\infty, e^-_c) \cap W^s_c(\infty, e^+_c)$. Hence, there are
		exactly two full bounded solutions of $\dot x = f(t,x)$ if $c>0$, one if $c=0$ and
		none otherwise. 
		
		Let $c>0$ and $u$ be a full bounded solution of $\dot x = f(t,x)$. To prove that $(f,u)$
		is weakly hyperbolic, suppose that $w$ is a full bounded solution of $\dot w = d(f,u)(t)w$.
		It follows that $w(0) \in TW^u_c(-\infty, e^-_c)(u(0))\cap TW^s_c(\infty, e^+)(u(0))$. 
		Letting $u(0) = (x_1, x_2)$, we have $x_1 = \sqrt{c}$
		\begin{equation*}
			TW^u_c(-\infty, e^-_c)(x_1, x_2) = \{ (h,2\sqrt{c}h) :\; h\in\IR\}
		\end{equation*}
		and
		\begin{equation*}
			TW^s_c(\infty, e^-_c)(x_1, x_2) = \IR\times\{0\}
		\end{equation*}
		Hence, $w(0)=0$ implying that $w\equiv 0$.
		
		Analogously, a solution $v:\IR\to \IR^2$ of $\dot x = g_c(t,x)$
		must converge to $e^\mp_c$ as $t\to\pm\infty$ and satisfy $v(0) \in W^u_c(\infty, e^+_c) \cap W^s_c(-\infty, e^-_c) = \{0\}$,
		so there is exactly one bounded solution $v$ of this equation.
		Calculating the tangential spaces as before, we see that $v$ is weakly hyperbolic.
		
		Let $y_{0,c} = y_0(f^c, g^c)$ be defined as in Section \ref{sec:parameter},
		and let $K_c\subset \omega(f_0) \times E$ denote
		the largest compact invariant set. Observe that $K_c$ agrees with \eqref{eq:171123-1318}
		for all parameters $c>0$.
		
		The homology Conley index $\Hom_*\Con(f^c_0, K_c)$ is by continuation
		independent of $c\in\IR$. We have already shown that $K_c$ is bounded for all $c\in\IR$,
		so it is sufficient to prove that $y_{0,c}$ depends in an appropriate way 
		continuously on $c$. To see this, one can simply adapt the
		arguments in the proof of Lemma \ref{le:171115-1506}.
		
		Suppose $\Hom_*\Con(y_{0,c}, K_c)$ is non-trivial. It would be particularly
		non-trivial for some $c<0$ implying that there exists a bounded solution
		$u:\;\IR\to E$ of $\dot x = f_c(t,x)$. As discussed above, such a solution does not exist
		for $c<0$, so $\Hom_*\Con(y_{0,c}, K_c) = 0$.
	\end{section}
	
	\begin{section}{A uniformity property}
		\label{sec:uniformity}
		There are several examples of uniformity making the nonautonomous Conley index useful. A nontrivial
		index with respect to a parameter $y_0$ implies the existence of solutions for every equation described by a $y\in\omega(y_0)$ \cite{article_naci}. Similarly, a nontrivial connecting homomorphism implies
		the existence of a connecting orbit for every equation \cite{article_naci_3}.
		
		In this paper, we consider a more specific situation described in the previous section. Roughly speaking, our claim
		is that the Betti numbers of the homology index of an invariant set cannot be larger than the respective numbers obtained independently for each of the autonomous limit	equations.
		
		Given a set $A\subset Y\times X$, we write
		\begin{align*}
			A(Y_0) &:= \{(x,y)\in A:\; y\in Y_0\}\quad\text{ for } Y_0\subset Y\\
			A(y) &:= \{x:\; (y,x)\in A\}\quad\text{ for } y\in Y
		\end{align*}
		
		\begin{theorem}
			\label{th:171115-1441}
			Let $f,g\in Y$ be asymptotically autonomous. Suppose that $f^t\to f^{\pm \infty}\in Y$ as $t\to\pm \infty$ and $g^t\to f^{\mp\infty}$ as $t\to\pm\infty$.
			
			Let $y_0 := y_0(f,g)$ be defined as in Section \ref{sec:parameter}, so in particular $\omega(y_0) = \Hull(f) \cup \Hull(g)$.
			Let $K\subset (\Hull(f)\cup \Hull(g))\times X$ be a compact isolated invariant set, so $K(f^\infty)$ is a compact isolated invariant set with respect to $f^\infty$. 
			
			Then for all $q\in\IZ$, either $\dim \Hom^\IF_q\Con(y_0, K) = \dim \Hom^\IF_q\Con(f^\infty, K(\{f^\infty\})) = \infty$ or $\dim \Hom^\IF_q\Con(y_0, K) \leq \dim \Hom^\IF_q\Con(f^\infty, K(\{f^\infty\}))<\infty$.
%
		\end{theorem}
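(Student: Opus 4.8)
The plan is to localise the nonautonomous index at the rest point $f^\infty$ of the translation flow on $\omega(y_0)$, exploiting that $y_0$ visits $f^\infty$ along time‑intervals of growing length. First I would replace $y_0$ by $y_0^\rho$ for a small $\rho>0$ (Lemma \ref{le:171115-1506}), so that the initial element is \emph{stationary} equal to $f^\infty$ on a sequence of intervals $[\sigma_k,\tau_k]$ with $\sigma_k\to\infty$ and $\tau_k-\sigma_k\to\infty$; on each such interval the evolution operator $\Phi_{y_0}$ coincides with the autonomous semiflow $\pi^\infty$ generated by $f^\infty$. Since $\{f^\infty\}$ is isolated in $\omega(y_0)$ (which is exactly why $K(\{f^\infty\})$ is an isolated invariant set for $\pi^\infty$), one may fix an isolating neighbourhood $N$ for $K$ whose slice over $f^\infty$, read inside $X$, isolates $K(\{f^\infty\})$ with respect to $\pi^\infty$, together with an index pair $(N_1,N_2)$ in $\IR^+\times X$ for $(y_0,K)$ built from $N$.

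The decisive point is a \emph{dwell estimate}: because $\tau_k-\sigma_k\to\infty$ and $N$ isolates $K$, every $\chi_{y_0}$-orbit segment lying in $N_1\setminus N_2$ throughout $[\sigma_k,\tau_k]$ is forced, uniformly in $k$, into an arbitrarily small neighbourhood of $K(\{f^\infty\})$ — in the infinite-dimensional case this is precisely where the strong admissibility of $N$ is used, to extract convergent subsequences and exclude escape. It follows that the $X$-slices $N_i^t:=\{x\in X:(t,x)\in N_i\}$, for $t\in[\sigma_k,\tau_k]$, form an index pair (indeed an index filtration) for $K(\{f^\infty\})$ with respect to $\pi^\infty$, so each of them carries the homotopy type of $\Con(f^\infty,K(\{f^\infty\}))$.

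Granting this, I would compare $\Hom^\IF_*\Con(y_0,K)=\Hom^\IF_*(N_1/N_2)$ with $\Hom^\IF_*\Con(f^\infty,K(\{f^\infty\}))$ through the maps induced by inclusion of a deep slice and by flowing an index pair forward into a deep autonomous slab — the latter substituting for the backward flow, which is unavailable for parabolic problems. Over a field these maps exhibit $\Hom^\IF_q\Con(y_0,K)$ as a subquotient of $\Hom^\IF_q\Con(f^\infty,K(\{f^\infty\}))$ — morally the part stable under the endomorphism induced by one full passage around the cycle (the $g$-transition, the dwell at $f^{-\infty}$ and the $f$-transition; the autonomous dwells induce the identity by homotopy invariance) — whence $\dim\Hom^\IF_q\Con(y_0,K)\leq\dim\Hom^\IF_q\Con(f^\infty,K(\{f^\infty\}))$ whenever the right-hand side is finite, which is the stated dichotomy. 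Re-parametrising the cycle so that $f^{-\infty}$ becomes the distinguished node yields the companion bound, and \eqref{eq:171206-1429} follows at once.

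The hardest part is the dwell estimate together with the identification of $\Hom^\IF_*\Con(y_0,K)$ with the above subquotient: one must control the slice dynamics uniformly over the infinitely many, ever longer, visit intervals, and one must verify that the inter-visit portions of the process — which themselves carry the structure of another passage through the cycle — contribute nothing surviving to the limit. This is where the connected-simple-system structure of $\Con(y_0,K)$ and the continuation and uniformity results of \cite{article_naci} and \cite{article_naci_3} carry the load; the remainder is routine bookkeeping with index pairs.
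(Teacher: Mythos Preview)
Your high-level strategy matches the paper's: replace $y_0$ by $y_0^\rho$ via Lemma~\ref{le:171115-1506}, exploit the long autonomous slabs near $f^\infty$, and compare homology on time-slices. The execution, however, differs at the decisive technical point, and your version contains a gap.

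The paper does \emph{not} claim that the $X$-slices of a single nonautonomous index pair for $(y_0^\rho,K_\rho)$ are themselves index pairs for $(f^\infty,K(\{f^\infty\}))$. That assertion is what you rely on, and it is not automatic: positive invariance of the exit set and the isolation condition for the slice need not be inherited from the ambient pair, even on arbitrarily long autonomous slabs. Instead the paper builds a \emph{sandwich} (Lemma~\ref{le:171115-1442}): two nonautonomous index pairs $(\underline N_1,\underline N_2)$ and $(\overline N_1,\overline N_2)$ for $(y_0^\rho,K_\rho)$ and a genuinely autonomous product pair $(N_1,N_2)=\IR^+\times(\tilde N_1,\tilde N_2)$ for $(f^\infty,K(\{f^\infty\}))$, with the nested slice inclusions
\[
(\underline N_1(a_n),\underline N_2^{-\eps}(a_n))\subset (N_1(a_n),N_2^{-\eps}(a_n))\subset(\overline N_1(a_n),\overline N_2^{-\eps}(a_n)).
\]
The construction (Lemma~\ref{le:171116-1805}) uses the $g^+,g^-$ machinery of \cite{article_naci_1} and a contradiction argument via strong admissibility; this is where your ``dwell estimate'' lives, but packaged so as to produce the three nested pairs rather than to certify a single slice.

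The comparison is then concrete: the homology index $\Hom_*^\IF\Con(y_0^\rho,K_\rho)$ is identified with a direct limit over the slices $(\underline N_1(a_n),\underline N_2^{-\eps}(a_n))$ (Lemma~3.8 of \cite{article_naci_3}), any finite basis subset is represented at a common $a_{n_0}$, and the inclusion-induced composite $l_q\circ k_q$ through the autonomous slice lands in $(\overline N_1,\overline N_2^{-\eps})$ and agrees with the isomorphism $j_q$ between the two nonautonomous index pairs. Hence $k_q$ is injective on that finite-dimensional subspace, giving the dimension bound directly. There is no subquotient and no monodromy endomorphism in the argument; your ``part stable under one full passage around the cycle'' is a plausible alternative viewpoint (in the spirit of discrete Conley index / shift equivalence), but it is not what the paper does and would require its own development.

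In short: keep the $\rho$-replacement and the slab idea, but replace the unproven ``slices are index pairs'' step by the three-pair sandwich and the direct-limit injectivity argument.
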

		
		Note that the Theorem is only proved for homology with coefficients in a field.
		
		The following lemma will be used to prove Theorem \ref{th:171115-1441}. Recall the
		definition of $C_0\subset \IC$ in Section \ref{sec:parameter}. We have also defined a semiflow
		on $C_0$ by differential equations \eqref{eq:171215-1809a} and \eqref{eq:171215-1809b}. This semiflow
		is denoted by $z^t$ for $z\in C_0$, and a designated initial element in $C_0$ is $z_0 := \frac{1}{2}$.
		
		\begin{lemma}
			\label{le:171115-1442}
			In addition to the hypotheses of Theorem \ref{th:171115-1441}, let $a_n\to\infty$ be a sequence
			of nonnegative real numbers such that $z^{a_n}_0/\abs{z^{a_n}_0} = e^{\imag \pi/2}$ for all $n\in\IN$.
			
			Let $\rho>0$ be small enough that the conclusions of Lemma \ref{le:171115-1506} hold,
			and let $K^\rho$ be given by Lemma \ref{le:171115-1506}.
			
			Then there are $\eps>0$ and regular index pairs $(\underline{N}_1, \underline{N}_2)$ and $(\overline{N}_1, \overline{N}_2)$
			for $(y^\rho_0, K^\rho)$ and $(N_1, N_2)$ for $(f^\infty, K(\{f^\infty\}))$
			such that\footnote{$(\tilde N_1, \tilde N_2)$ is an index pair for $K(f^\infty)$ in the sense of Franzosa and Mischaikow
				or an FM-index pair as defined by Rybakowski.}
			\begin{equation*}
				N_1 = \IR^+\times \tilde N_1\quad N_2 = \IR^+\times \tilde N_2\text{ with }\tilde N_1,\tilde N_2\subset X
			\end{equation*}
			and for all $n\in\IN$ sufficiently large
			\begin{equation}
				\label{eq:171115-1557}
				(\underline{N}_1(a_n), \underline{N}^{-\eps}_2(a_n)) 
				\subset (N_1(a_n), N^{-\eps}_2(a_n))
				\subset (\overline{N}_1(a_n), \overline{N}^{-\eps}_2(a_n)) 
			\end{equation}
		\end{lemma}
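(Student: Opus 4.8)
The plan is to establish the nesting (\ref{eq:171115-1557}) by exploiting the fact that, as $n\to\infty$, the translated parameter $(y^\rho_0)^{a_n}$ converges to $f^\infty$, together with the flow-continuation stability of index pairs.

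\textbf{Step 1: reduce to a neighbourhood of $f^\infty$.} Since $z^{a_n}_0/|z^{a_n}_0| = e^{\imag\pi/2}$ and $a_n\to\infty$, the point $z^{a_n}_0$ approaches the fixed point $e^{\imag\pi/2}$ of the semiflow on $C_0$. For $\rho>0$ the modified map $F^\rho$ is constant (equal to $f^\infty(0)$) on the $\rho$-sector around $\pi/2$, so $(y^\rho_0)^{a_n} = \hat F^\rho(z^{a_n}_0)$ agrees with the constant parameter $f^\infty$ on an arbitrarily long time interval $[-T_n,T_n]$ with $T_n\to\infty$; consequently $(y^\rho_0)^{a_n}\to f^\infty$ in $Y$. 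The evolution operator $\chi_{(y^\rho_0)^{a_n}}$ on $\IR^+\times X$ therefore converges, uniformly on compact sets of time, to the product semiflow $\chi_{f^\infty}$ generated by the autonomous equation, which in turn is conjugate to the direct product of trivial translation on $\IR^+$ with the autonomous semiflow on $X$ having $K(f^\infty)$ as isolated invariant set.

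\textbf{Step 2: build the three index pairs.} Start from a strongly admissible isolating neighbourhood $N$ for $(y^\rho_0,K^\rho)$, which exists by Lemma~\ref{le:171115-1506}(\ref{item:171114-1}); shrinking $N$ near the sector around $\pi/2$, the portion of $N$ lying over that sector is of product form $\IR^+\times \tilde N$ with $\tilde N\subset X$ an isolating neighbourhood for $K(f^\infty)$ relative to the autonomous semiflow. Let $(N_1,N_2)=(\IR^+\times\tilde N_1,\IR^+\times\tilde N_2)$ be the regular (FM-type) index pair for $(f^\infty,K(f^\infty))$ coming from the standard autonomous construction inside $\tilde N$. For the nonautonomous side, apply the usual construction of a regular index pair twice, once inside a slightly smaller isolating neighbourhood contained in $r^{-1}(\interior\,\tilde N)$ over the sector and once inside a slightly larger one containing $r^{-1}(\tilde N)$, producing $(\underline N_1,\underline N_2)$ and $(\overline N_1,\overline N_2)$ with $\underline N_1\subset r^{-1}(\interior\,\tilde N)\cup(\text{rest})$ and $\overline N_1\supset r^{-1}(\tilde N)\cup(\text{rest})$. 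The point is that over the $\rho$-sector all three pairs are governed by the \emph{same} autonomous semiflow, only with nested isolating neighbourhoods $\underline{\tilde N}\subset\tilde N\subset\overline{\tilde N}$.

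\textbf{Step 3: transfer the nesting along the fibre $a_n$.} Fix $\eps>0$ small, determined by the autonomous isolating block for $K(f^\infty)$: choose $\eps$ so that the flow cannot cross from one of the nested neighbourhoods to the next without first exiting in time less than $\eps$ (this uses that $K(f^\infty)$ is isolated, so the entrance/exit behaviour is uniformly controlled). For $n$ large enough that $(y^\rho_0)^{a_n}$ coincides with $f^\infty$ on $[0,\eps]$ and that $z^{a_n}_0$ sits deep in the $\rho$-sector, the fibre slices $\underline N_1(a_n)$, $N_1(a_n)$, $\overline N_1(a_n)$ are exactly $\underline{\tilde N}_1$, $\tilde N_1$, $\overline{\tilde N}_1$ (up to the identification $r^{-1}$), and likewise the $-\eps$-enlarged exit slices $\underline N^{-\eps}_2(a_n)$, $N^{-\eps}_2(a_n)$, $\overline N^{-\eps}_2(a_n)$ are computed purely from the autonomous flow on $[0,\eps]$. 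The desired inclusions (\ref{eq:171115-1557}) then follow from the monotonicity of the index-pair construction with respect to the nesting of isolating neighbourhoods: a smaller isolating neighbourhood yields a smaller $N_1$ and a correspondingly smaller $N^{-\eps}_2$, and these comparisons are respected fibrewise because over the sector everything is a product with the $\IR^+$-factor.

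\textbf{Main obstacle.} The delicate point is \emph{coherence of the $\eps$}: the same $\eps$ must simultaneously work for all three pairs and for all large $n$. This requires that the exit-time estimates near $K(f^\infty)$ be uniform not only in the autonomous flow but also under the small perturbation represented by replacing $f^\infty$ with $(y^\rho_0)^{a_n}$ outside the window $[0,\eps]$ --- and uniform with respect to the choice between the three nested isolating neighbourhoods. I would handle this by first fixing the nested autonomous isolating neighbourhoods and extracting a single $\eps$ from the compactness of $K(f^\infty)$ and strong admissibility, then choosing $\rho$ and the threshold on $n$ afterwards so that the perturbation is invisible on $[0,\eps]$; the product structure over the sector is exactly what makes this separation of scales legitimate.
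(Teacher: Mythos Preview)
Your overall architecture is close to the paper's, but Step~3 contains a genuine gap. You assert that for large $n$ the fibre slices $\underline N_1(a_n)$, $\overline N_1(a_n)$ of the \emph{nonautonomous} index pairs are ``exactly'' the slices of certain autonomous index pairs built inside $\underline{\tilde N}$, $\overline{\tilde N}$. This is not justified: the index-pair construction (via the functions $g^+,g^-$ or any equivalent) depends on the entire forward orbit, and orbits starting at $(a_n,x)$ eventually leave the $\rho$-sector, so the slice $\underline N_1(a_n)$ is \emph{not} determined by the autonomous dynamics alone, even when the isolating neighbourhood has product form over the sector. Your ``monotonicity of the index-pair construction with respect to nesting of isolating neighbourhoods'' is likewise not a standard fact and does not follow from the $g^\pm$-construction (shrinking $N$ changes $\Inv^-_\pi(N)$ and hence $g^-$ in a non-monotone way).

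The paper handles exactly this difficulty by isolating an auxiliary result (Lemma~\ref{le:171116-1805}) which, given an index pair $(N_1,N_2)$ for one initial element $\tilde y_0$, \emph{constructs} an index pair $(\hat L^{c_1,c_2}_1,\hat L^{c_1,c_2}_2)$ for another initial element $y_0$ with the required fibrewise inclusion, via a contradiction argument using compactness of $\Inv^-_\pi(N)$ and the fact that $K\cap L^{c_1,c_2}_2=\emptyset$. The lemma is applied twice, once with $(y^\rho_0,K)$ playing the role of $(\tilde y_0,\tilde K)$ to produce the autonomous pair $(N_1,N_2)$, and once with the roles reversed to produce $(\underline N_1,\underline N_2)$. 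Each application yields inclusions with its own exit-time shift $T$, $T'$; only afterwards are the shifts reconciled to a common $\eps$ by using that the two semiflows agree on a window of length $\Delta$ near $a_n$ and that $(M_1,M_2^{-T})$ is again an index pair. Your ``main obstacle'' paragraph correctly identifies this reconciliation issue but proposes to resolve it in the wrong order: you cannot fix $\eps$ first from autonomous data and then hope the nonautonomous pairs comply; you must build the pairs with whatever shifts the construction gives and then absorb the discrepancy into enlarged exit sets.
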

		
		\begin{proof}[Proof of Theorem \ref{th:171115-1441}]
			Choosing $\rho>0$ small enough, we can assume that the conclusions of Lemma \ref{le:171115-1506} hold,
			so the invariant set $K_\rho$ is defined. Furthermore, there is an isomorphism $\Hom_*\Con(y^\rho_0, K_\rho) \to \Hom_*\Con(y_0, K)$.
			
			We need to show that $\iota$ can be defined by exploiting the inclusions given by \eqref{eq:171115-1557}.
			The homology index is isomorphic to a direct limit. Let $(M_1, M_2)$
			be a regular index pair for $(y^\rho_0, K)$ and $m,n\in \IN$ with $m\leq n$.
			By Lemma 3.8 in \cite{article_naci_3} and for $\eps>0$,
			there are homomorphisms 
			\begin{equation*}
			g_{m,n}:\;\Hom_*(M_1(\{a_m\}), M^{-\eps}_2(\{a_m\}))\to \Hom^{-\eps}_*(M_1(\{a_n\}), M^{-\eps}_2(\{a_n\})) =: A_n
			\end{equation*}
			The family $(A_n, g_{n,m})$ with the natural ordering on $\IN$ forms a direct system.
			The inclusions $(M_1(\{a_n\}), M^{-\eps}_2(\{a_n\}) \subset (M_1, M^{-\eps}_2)$ induce
			an isomorphism 
			\begin{equation}
				\label{eq:171219-0945}
				\dirlim (A_n, g_{n,m}) \to \Hom_*(M_1, M^{-\eps}_2).
			\end{equation}
			
			The direct limit $\dirlim(A_n, g_{n,m})$ is a set of equivalence classes, where $\rep{\alpha} = \rep{\beta}$
			if $\alpha\in A_n, \beta\in A_m$, $n\leq m$ and $g_{n,m}(\alpha) = \beta$. Using \eqref{eq:171219-0945},
			we can identify $\rep{\alpha}$ with an element of $\Hom_*(M_1, M^{-\eps}_2)$.
			
			Let
			 $(\eta_i)_{i\in I}$ be a basis for $\Hom^\IF_q(\underline N_1,\linebreak[0] \underline N^{-\eps}_2)$.
			Each $\eta_i$ can be written as $\eta_i = \rep{\alpha_i}$ with $\alpha_i\in \Hom_q(\underline{N}_1(\{a_n\}), \underline{N}^{-\eps}_2(\{a_n\})$. Suppose that $I_0\subset I$ is a finite subset of $k_0$ elements.
			By using our freedom to choose $\alpha_i$, we can assume without loss of generality that
			$\alpha_i\in \Hom_q(\underline{N}_1(\{a_{n_0}\}))$ for all $i\in I_0$ and some $n_0\in\IN$ independent
			of $i$.	

			Let $\mathcal{A}$ denote the subspace of $\Hom_q(\underline{N}_1(\{a_n\}), \underline{N}^{-\eps}_2(\{a_n\}))$
			spanned by $(\alpha_i)_{i\in I_0}$ and $\alpha\in \mathcal{A}$ be arbitrary. Let 
			$k_q:\; \Hom_q(\underline{N}_1(\{a_n\}), \underline{N}^{-\eps}_2(\{a_n\}))\supset \mathcal{A} \to \Hom_q(N_1(\{a_n\}), N^{-\eps}_2(\{a_n\}))$ and
			$l_q:\; \Hom_q(N_1(\{a_n\}), N^{-\eps}_2(\{a_n\})) \to \Hom_q(\overline{N}_1(\{a_n\}), \overline{N}^{-\eps}_2(\{a_n\}))$ be inclusion induced. It follows from \eqref{eq:171115-1557}
			that $\rep{l_q\circ k_q(\alpha)} = j_q(\rep{\alpha})$, where $j_q:\; \Hom_q(\underline{N}_1, \underline{N}^{-\eps}_2)
			\to \Hom_q(\overline{N}_1, \overline{N}^{-\eps}_2)$ is inclusion induced and an isomorphism \cite[Lemma 2.5]{article_naci_1} since both
			$(\underline{N}_1, \underline{N}^{-\eps}_2)$ and $(\overline{N}_1, \overline{N}^{-\eps}_2)$ are index pairs
			for $(y^\rho_0, K_\rho)$.
			Therefore, $k_q(\alpha)\neq 0$ unless $\alpha=0$, 
			implying that $\dim \Hom^\IF_q(N_1(\{a_{n_0}\}), N^{-\eps}_2(\{a_{n_0}\}))\geq k_0$.
			
			Note that $N_i(\{a_n\}) = \{a_n\}\times \tilde N_i$ for $i\in\{1,2\}$, where $\tilde N_1$
			and $\tilde N_2$ are given by Lemma \ref{le:171115-1442}. Thus, $\Hom_*(N_1(\{a_n\}, N^{-\eps}_2(\{a_n\}))$
			and $\Hom_*(N_1, N^{-\eps}_2)$ are isomorphic regardless of $n$. The latter is an index pair for $(f^\infty, K(\{f^\infty\}))$.
		\end{proof}
		
		To prove Lemma \ref{le:171115-1442}, another auxiliary lemma is required. The index pairs
		will be obtained using a method developed in \cite{article_naci_1}. For the
		reader's convenience, we will recall the construction shortly.
		
		Let $y_0\in Y$ and $N\subset \Hull^+(y_0)\times X$ be a strongly admissible isolating neighbourhood
		for an invariant set $K\subset \omega(y_0)\times X$. Let $U$ be an open subset of $\Hull^+(y_0)\times X$ 
		with $K\subset U\subset N$, and set
		\begin{align*}
			g^+(y,x) &:= \sup\{t\in\IR^+:\; (y,x)\pi\left[0,t\right]\subset U\}\\
			g^-(y,x) &:= \sup\{d((y,x)\pi t, \Inv^-_\pi(N)):\; t\in\left[0,g^+(y,x)\right]\}
		\end{align*}
		
		The sets $\{g^+\leq c\} := \{(y,x)\in N:\; g^+(y,x)\leq x\}$ and $\{g^-\leq c\} := \{(y,x)\in N:\; g^-(y,x)\leq c\}$
		are closed for arbitrary $c\in\IR^+$ by Lemma 2.1 in \cite{article_naci_1}. Additionally, it is proved
		\cite[Lemma 2.12]{article_naci_1} that $(\hat L^{c_1,c_2}_1, \hat L^{c_1,c_2}_2)$ is an index pair for $(y_0, K)$,
		where we set
		\begin{align*}
			L^{c_1,c_2}_1 &:= \{g^- \leq c_1\} \cap \cl\{g^+\geq c_2\}\\
			L^{c_1,c_2}_2 &:= L^{c_1, c_2}_1 \cap \{g^+\leq c_2\}
		\end{align*}
		$r_{y_0}:\;\IR^+\times X \to Y\times X$, $r_{y_0}(t,x) = (y^t_0,x)$
		and $\hat L^{c_1,c_2}_i := r^{-1}_{y_0}(L^{c_1,c_2}_i)$.
		
		\begin{lemma}
			\label{le:171116-1805}
			Let us make the following assumptions.
			\begin{enumerate}
				\item $y_0,\tilde y_0\in Y$
				\item $a_n\to\infty$ is a sequence of nonnegative reals
				\item $(N_1, N_2)$ is an index pair for $(\tilde y_0,\tilde K)$
				\item there exists a strongly admissible isolating neighbourhood
					for $K$ in $\Hull^+(y_0)\times X$
				\item $y^{a_n}_0\to f^\infty$ and $\tilde y^{a_n}_0\to f^\infty$ as $n\to\infty$,
				where $f^\infty\in Y$ is autonomous as previously assumed
				\item $K(f^\infty) = \tilde K(f^\infty)$
				\item $T>0$ is a real number
			\end{enumerate}
			
			Then there exists a strongly admissible isolating neighbourhood $N$ and
			an open set $U$ with $K\subset U\subset N$
			such that for all $c_1>0$ sufficiently small, all $T'>0$ sufficiently large and all $n\in\IN$
			large enough, it holds that
			\begin{equation*}
				(\hat L^{c_1,c_2}_1(a_n), (\hat L^{c_1,c_2}_2)^{-T}(a_n)) \subset (N_1(a_n), N^{-T'}_2(a_n))
			\end{equation*}
		\end{lemma}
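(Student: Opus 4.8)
The plan is to read everything off the autonomous limit $f^\infty$. Since $f^\infty$ is autonomous it is a rest point of the translation on $Y$, so from $y^{a_n}_0\to f^\infty$ and $\tilde y^{a_n}_0\to f^\infty$ and continuity of the translation we get $y^{a_n+s}_0\to f^\infty$ and $\tilde y^{a_n+s}_0\to f^\infty$ uniformly for $s$ in compact intervals; together with continuity of $\Phi$ and strong admissibility (to exclude blow-up on the relevant bounded windows) this gives
\begin{equation*}
	\Phi(s,y^{a_n}_0,\cdot),\quad \Phi(s,\tilde y^{a_n}_0,\cdot)\ \longrightarrow\ \Phi(s,f^\infty,\cdot)
\end{equation*}
uniformly on compact subsets of $\IR^+\times X$. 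Hence any feature of a slice of an index pair at time $a_n$ which is stable under small $C^0$-perturbations of the flow on a bounded window is, for large $n$, governed by the corresponding object over the fibre $\{f^\infty\}\times X$; and the hypothesis $K(f^\infty)=\tilde K(f^\infty)$ is exactly what guarantees that these limiting objects refer to one and the same autonomous invariant set, so that the $y_0$-picture and the $\tilde y_0$-picture can be compared at all.

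First I would pin down the ambient data. By (IP5) for $(N_1,N_2)$ there is a neighbourhood $W$ of $\tilde K$ in $\Hull^+(\tilde y_0)\times X$ with $r^{-1}_{\tilde y_0}(W)\subset N_1\setminus N_2$; put $V:=W(f^\infty)$, a neighbourhood of $K(f^\infty)$. Starting from the strongly admissible isolating neighbourhood for $K$ in $\Hull^+(y_0)\times X$ whose existence is assumed, choose $N$ and an open $U$ with $K\subset U$ and $\cl U\subset\interior N$, and then take $U$ small and, correspondingly, $c_1$ small and $c_2$ large, so that — by the compactness arguments available in the admissible setting (using that $\{g^+\le c\}$ and $\{g^-\le c\}$ are closed and $\cl U\subset N$) — every point of $L^{c_1,c_2}_1$ lies in a prescribed, arbitrarily small neighbourhood of $K$: such a point is within $c_1$ of $\Inv^-_\pi(N)$ and its forward orbit stays in $U$ for time at least $c_2$, and a limit along $c_1\to0$, $c_2\to\infty$ has a full orbit in $N$, hence lies in $\Inv_\pi(N)=K$. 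By the construction recalled above, $(\hat L^{c_1,c_2}_1,\hat L^{c_1,c_2}_2)$ is then an index pair for $(y_0,K)$, so the assertion is meaningful.

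With these choices the first inclusion is immediate: a point $x$ of $\hat L^{c_1,c_2}_1(a_n)=L^{c_1,c_2}_1(y^{a_n}_0)$ lies, for $n$ large, in a fixed small neighbourhood of $K(f^\infty)$ (upper semicontinuity of $y\mapsto K(y)$), so by $\tilde y^{a_n}_0\to f^\infty$, $W$ open and a tube-lemma argument one has $(\tilde y^{a_n}_0,x)\in W$, whence $x\in N_1(a_n)\setminus N_2(a_n)$; thus $\hat L^{c_1,c_2}_1(a_n)\subset N_1(a_n)$. For the exit-set inclusion, let $x\in(\hat L^{c_1,c_2}_2)^{-T}(a_n)$. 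Within time $\le T$ the $\chi_{y_0}$-orbit of $(a_n,x)$ reaches a point which is $c_1$-close to $\Inv^-_\pi(N)$ and has $g^+\le c_2$; since $K\subset U$ forces $g^+=\infty$ on $K$, the set of such points together with its $c_1$-fattening is compact and disjoint from $K=\Inv_\pi(N)$, and the usual compactness/admissibility argument shows that their forward $\pi$-orbits leave the isolating neighbourhood within a time $T_2$ bounded uniformly for small $c_1$ — otherwise a limiting orbit would be full and would lie in $K$. Passing to the limit $n\to\infty$ and using the flow comparison of the first paragraph and $K(f^\infty)=\tilde K(f^\infty)$, the $\chi_{\tilde y_0}$-orbit of $(a_n,x)$, for $n$ large, likewise leaves $r^{-1}_{\tilde y_0}(\tilde N)$ — with $\tilde N$ the isolating neighbourhood supplied by (IP4) — within a uniformly bounded time; by (IP4) it then exits $N_1\setminus N_2$, and by (IP2) it must have met $N_2$ no later than that time. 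Taking $T'$ to be this bound completes the inclusion.

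The step I expect to be the main obstacle is the simultaneous calibration of $U$, $N$, $c_1$, $c_2$ and $T'$ making \emph{both} inclusions hold, uniformly in the large parameter $n$: the first wants the slices of the $L$-index pair small enough to fit inside the neighbourhood $V$ that (IP5) carves out of $N_1$, while the second needs their ``about-to-exit'' part to escape in a way the given pair $(N_1,N_2)$ actually detects, i.e.\ so as to force the $\chi_{\tilde y_0}$-orbit into $N_2$ within a controlled time. Reconciling these — in particular arranging that ``leaving the small set attached to $L$'' translates into ``leaving $r^{-1}_{\tilde y_0}(\tilde N)$'', which is where one genuinely needs that the limiting $f^\infty$-dynamics isolate the same set $K(f^\infty)=\tilde K(f^\infty)$ in both worlds — together with making the uniform escape-time estimate go through in the merely asymptotically compact setting, is the delicate part of the bookkeeping.
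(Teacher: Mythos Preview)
Your overall strategy coincides with the paper's: use (IP5) to get the first inclusion, and a compactness/contradiction argument through the common fibre $f^\infty$ together with $K(f^\infty)=\tilde K(f^\infty)$ for the second. The first inclusion is fine, though the paper is more economical: it simply shrinks $N$ so that $N(y)\subset\tilde U(y)$ for $y$ near $f^\infty$, where $\tilde U$ is the neighbourhood supplied by (IP5); since $L^{c_1,c_2}_1\subset N$ always, this gives $\hat L^{c_1,c_2}_1(a_n)\subset N_1(a_n)$ for large $n$ \emph{regardless} of $c_1,c_2$. Your additional step of forcing $L^{c_1,c_2}_1$ close to $K$ by sending $c_1\to 0$, $c_2\to\infty$ is not needed here.

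For the exit-set inclusion your argument takes a detour that, as written, leaves a gap. You first show that the $\pi$-orbit through $(y^{a_n}_0,x)$ leaves $N$ in a bounded time $T_2$, and then assert that ``likewise'' the $\chi_{\tilde y_0}$-orbit leaves $r^{-1}_{\tilde y_0}(\tilde N)$. But flow comparison on the window $[0,T+T_2]$ only tells you the two orbits are close; it does \emph{not} transfer ``leaves $N$'' to ``leaves $\tilde N$'', since $\tilde N$ may be strictly larger than $N$. The paper sidesteps this by running the contradiction directly on the $\tilde y_0$-orbit: suppose $(a_n,x_n)\chi_{\tilde y_0}[0,n]\subset N_1\setminus N_2$; by (IP4) this orbit stays in $\tilde N$, and strong admissibility yields a limit $(f^\infty,x_0)\in\Inv^+_\pi(\tilde N)$. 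From $g^-\le c_1\to 0$ one also has $(f^\infty,x_0)\in\Inv^-_\pi(N)$, and since $N$ was chosen with $N(f^\infty)\subset\tilde U(f^\infty)\subset\tilde N(f^\infty)$ this gives $(f^\infty,x_0)\in\Inv_\pi(\tilde N)$, hence $x_0\in\tilde K(f^\infty)=K(f^\infty)$. Passing the relation ``hits $L^{c_1,c_2}_2$ within time $T$'' to the limit then contradicts $K\cap L^{c_1,c_2}_2=\emptyset$. All your ingredients are present, and your final paragraph correctly flags exactly this as the delicate point; the fix is simply to argue about the $\tilde y_0$-orbit from the outset rather than trying to import a conclusion about the $y_0$-orbit.
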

		
		\begin{proof}
			$(N_1, N_2)$ is an index pair for $(\tilde y_0, \tilde K)$, so by definition exists
			an open neighbourhood $\tilde U\subset \Hull^+(\tilde y_0)$ of $\tilde K$ such that
			$r^{-1}_{\tilde y_0}(\tilde U)\subset N_1\setminus N_2$. 
			
			By choosing $N$ small enough, we can assume that $N(y)\subset \tilde U(y)$ for
			all $y$ in a small neighbourhood of $f^\infty$. It follows that $\hat L^{c_1,c_2}_1(a_n)\subset N_1(a_n)$
			for $n\in\IN$ large enough and regardless of $c_1, c_2$. Fix an open set $U$ with $K\subset U\subset N$.
			
			Assume for contradiction there are sequences $c^n_1\to 0$
			and $x_n\in (\hat L^{c^n_1,c_2})^{T'}_2(a_n)$ but $x\chi_{\tilde y_0} t\in N_1(a_n)\setminus N_2(a_n)$ for all $t\in\left[0,n\right]$. It follows that $d((y^{a_n}_0,x_n), \Inv^-_\pi(N))\to 0$, so we may assume
			without loss of generality that $(y^{a_n}_0,x_n)\to (f^\infty,x_0)\in \Inv^-_\pi(N)$ since $\Inv^-_\pi(N)$ is compact. We have $N(f^\infty)\subset \tilde N(f^\infty)$, so $(f^\infty,x_0)\in \Inv^-_\pi(\tilde N)$.
			
			The definition of an index pair implies that there is a strongly admissible isolating neighbourhood
			$\tilde N$ for $(\tilde y_0, \tilde K)$ such that $N_1\setminus N_2\subset r^{-1}_{y_0}(\tilde N)$.
			We have $(\tilde y^{a_n}_0, x_n)\pi t\in \tilde N$ for all $t\in\left[0,n\right]$, so
			$(f^\infty,x_0)\in \Inv^+_{\pi}(\tilde N)$.
			
			Together, we have proved that $(f^\infty,x_0)\in \Inv(\tilde N)$, so $(f^\infty, x_0)\in \tilde K(f^\infty) = K(f^\infty)$.
			
			Additionally, it holds that $(y^{a_n}_0, x_n)\pi s\in L^{c_1,c_2}_2$ for some $s_n\in\left[0,T\right]$, so $(f^\infty, x_0)\pi s\in L^{c_1, c_2}_2$ for some $s\in \left[0,T\right]$. However, it follows from Lemma 2.11 (e) in \cite{article_naci_1} that
			$K\cap L^{c_1,c_2}_2 = \emptyset$, a contradiction.
		\end{proof}
		
		\begin{proof}[Proof of Lemma \ref{le:171115-1442}]
			Let $(\overline{N}_1, \overline{N}_2)$ be an index pair for $(y^\rho_0, K)$. An index pair
			can be obtained by using the construction above, see \cite[Lemma 2.12]{article_naci_1}. Renaming
			$(y^\rho_0, K)$ to $(\tilde y_0, \tilde K)$ and replacing $(y_0, K)$ by $(f^\infty, K(\{f^\infty\}))$,
			we can apply Lemma \ref{le:171116-1805} and obtain an index pair $(N_1, N_2)$ for $(f^\infty, K(\{f^\infty\}))$
			such that $(N_1(a_n), N_2(a_n)) \subset (\overline{N}_1(a_n), \overline{N}^{-T}_2(a_n)$ for some $T>0$
			and almost all $n\in\IN$.
			
			By using Lemma \ref{le:171116-1805} again, we show that there exists an index pair $(\underline{N}_1, \underline{N}_2)$ for $(y^\rho_0, K)$ such that $(\underline{N}_1(a_n), \underline{N}^{-\eps}_2(a_n)) \subset (N_1(a_n), N^{-T'}_2(a_n))$ for some $T'>0$	and almost all $n\in\IN$.
			
			Recall that by Lemma 2.7 in \cite{article_naci_1}, 
			$(M_1, M^{-T}_2)$ is an index pair for $(y'_0, K')$ if $(M_1, M_2)$ is an index pair for $(y'_0, K')$.
			Let $\Delta>0$ be arbitrary.
			The evolution operators $\Phi_{y^\rho_0}(t,a_n,.)$
			and $\Phi_{f^\infty}(t,a_n,.)$ agree for $t-a_n\leq \Delta$ provided that $n$ is large enough
			since $F^\rho(r \mathrm{e}^{\imag \varphi}) = f^\infty(0)$ in a sector of angle $\rho$ around $\varphi_0 = \pi/2$.
			Hence, $\left(N_1(a_n), N^{-\Delta}_2(a_n)\right)\subset \left(\overline{N}_1(a_n), \overline{N}^{-(T+\Delta)}_2(a_n)\right)$
			provided that $n\geq n_0(\Delta)$. Choosing $\Delta:=T'$, one has
			\begin{align*}
				\left(\underline{N}_1(a_n), \underline{N}^{-\eps}_2(a_n)\right) 
				&\subset \left(N_1(a_n), \left(N^{-(\Delta-\eps)}_2\right)^{-\eps}(a_n)\right)\\
				&\subset \left(\overline{N}_1(a_n), \left(\overline{N}^{-(T+\Delta-\eps)}_2\right)^{-\eps}(a_n)\right)
			\end{align*}
			for all but finitely many $n\in\IN$.
		\end{proof}
	\end{section}
	
	\begin{section}{Abstract semilinear parabolic equations}
		\label{sec:abstracteq}
		In this section, we will prove results concerning the existence and
		multiplicity of hyperbolic solutions, in particular
		an abstract version of Theorem \ref{th:171208-1449} which
		can be found in the introduction.
		
		The abstract setting follows Section \ref{sec:parameters_infinite}, so in particular
		$E:=W^\alpha$, where $W^\alpha$ denotes the $\alpha$-th fractional power space. 
		Let us assume that the parameter space $Y$ ist the set of all continuous $f:\IR\times E\to W$
		which agrees up to an obvious identification with the definition given in Section \ref{sec:parameters_infinite}.
		
		We consider evolution operators $\Phi_f$ defined by mild solution of
		the following evolution equation.
		\begin{equation}
			\label{eq:171208-1539}
			\dot u + Au = f(t,u)\quad f\in Y
		\end{equation}
		
		Assume that $f$ is asymptotically autonomous with $f^t\to f^{\pm\infty}$ as $t\to\pm\infty$. Let
		$g$ be another asymptotically autonomous parameter with $g^t\to f^{\mp\infty}$ as $t\to\pm\infty$.
		The time reverse of $f$ i.e., $g(t,x) = f(-t,x)$ for all $(t,x)\in\IR\times E$, is a possible
		choice for $g$, but this approach is sometimes too simple.
		Using $y_0:=y_0(f,g)$ as defined in Section \ref{sec:parameter}, we have
		$\Hull^+(y_0) = \Hull(f)\cup\Hull(g)$.
		
		In addition to the regularity assumptions on $f$ imposed by $Y$, let us assume
		that $f^{\pm\infty}(0,x)\in C^1(E,W)$. Consequently, the Morse-index $m(e)$ can be 
		defined for every hyperbolic equilibrium of
		\begin{equation}
			\label{eq:171208-1647}
			\dot u + Au = f^{\pm\infty}(0,u)
		\end{equation}
		
		Let $K_0\subset E$ be an isolated invariant subset with respect to the
		semiflow induced by \eqref{eq:171208-1647} for either of the equations. 
		We say that $f$ is asymptotically gradient-like if:
		\begin{enumerate}
			\item[(H1)] Every equilibrium of \eqref{eq:171208-1647} is hyperbolic.
			\item[(H2)] A solution $u:\;\IR\to K_0$ of $\Phi_{f^{\pm\infty}}$
			is either constant or there are equilibria $e^\pm$ of $\Phi_{f^{\pm\infty}}$
			such that $m(e^-) > m(e^+)$ and	$u(t)\to e^\pm$ as $t\to\pm\infty$.
		\end{enumerate}
		
		In view of \cite[Theorem 2.4]{article_generic17}, we say that $f$ is gradient-like
		if $f$ is asymptotically gradient-like and:
		\begin{enumerate}
			\item[(H3)] For every solution $u:\IR\to K(\Hull(f))$, there are $(f^-,e^-), (f^+,e^+)\in K$
			such that $m(e^-)\geq m(e^+)$ and $u(t) \to (f^\pm, e^\pm)$ as $t\to\infty$.
		\end{enumerate}
		
		\begin{theorem}
			\label{th:171208-1837}
			In addition to the above hypotheses, assume that $f$ and $g$ are gradient-like.

			If $K\subset\Hull^+(y_0)\times E$ is a compact isolated invariant set with
			\begin{equation*}
				\dim\Hom^\IF_q\Con(y_0(f,g), K)=n_0 \quad \text{for some }q\in\IZ
			\end{equation*}
			then there are (at least) $n_0$ pairwise distinct solutions $u_1,\dots, u_{n_0}$ of $\Phi_f$ connecting
			equilibria of Morse-index $q$.
		\end{theorem}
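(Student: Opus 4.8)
The plan is to reduce, via the Morse decomposition of $K$ by Morse index, to the case in which every equilibrium occurring in $K$ has index $q$, and then to read off the connecting solutions from a direct‑limit description of the homology Conley index à la Lemma \ref{le:171115-1442} and \cite{article_naci_3}.

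\emph{Step 1: reduction to the Morse set of index $q$.} We may assume $n_0\ge 1$. By (H1)--(H3), together with the description of $(M_k)_k$ in the introduction, the compact isolated invariant set $K$ carries a Morse decomposition in which the Morse set $M_k$ consists of the index‑$k$ equilibria of $\dot u+Au=\hat f^{\pm\infty}(u)$ lying in $K$ and of the $\Phi_f$‑ and $\Phi_g$‑connections between them, with connecting orbits of $\pi$ running from $M_l$ to $M_k$ only when $k<l$. Each $M_k$ is again a compact isolated invariant set, and by construction every equilibrium of a limit equation occurring in $M_k$ has Morse index $k$; hence \eqref{eq:171206-1429} (the corollary of Theorem \ref{th:171115-1441}) gives $\Hom^\IF_j\Con(y_0,M_k)=0$ for $j\ne k$, i.e. $\Hom^\IF_*\Con(y_0,M_k)$ is concentrated in degree $k$. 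Feeding this into Conley's Morse inequalities for $(M_k)_k$, which in the present nonautonomous setting follow by iterating the attractor--repeller sequence (cf. \cite{conley1978isolated}, \cite{article_naci_1}), we obtain $\dim\Hom^\IF_q\Con(y_0,M_q)\ge\dim\Hom^\IF_q\Con(y_0,K)=n_0$. Since any solution of $\Phi_f$ contained in $M_q\subset K$ connects equilibria of Morse index $q$, it suffices to prove the theorem with $K$ replaced by $M_q$. From now on $K(\{f^{-\infty}\})=\{p_1,\dots,p_a\}$ and $K(\{f^{\infty}\})=\{r_1,\dots,r_b\}$ are finite sets of hyperbolic equilibria of Morse index $q$ (there are no connecting solutions inside a limit equation, by (H2)), and every entire solution in $K$ is one of these or a $\Phi_f$‑ or $\Phi_g$‑connection between two index‑$q$ equilibria.

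\emph{Step 2: the index as the eventual image of a return map.} Fix $\rho>0$ small enough for Lemma \ref{le:171115-1506} and a sequence $a_n\to\infty$ with $z^{a_n}_0/\abs{z^{a_n}_0}=e^{\imag\pi/2}$. Using the regular index pairs of Lemma \ref{le:171115-1442} and the identification of $\Hom^\IF_*\Con(y_0,K)$ with the direct limit of its time‑$a_n$ slices (Lemma 3.8 in \cite{article_naci_3}, the mechanism already underlying Theorem \ref{th:171115-1441}), we obtain $\Hom^\IF_q\Con(y_0,K)\iso\dirlim(A_n,g_{n,n+1})$ with $A_n\iso\Hom^\IF_q\Con(f^\infty,K(\{f^\infty\}))\iso\IF^{b}$, the isomorphism carrying a basis of $A_n$ to the equilibria $r_1,\dots,r_b$. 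Inserting into the direct system intermediate slices at times of phase $f^{-\infty}$, and using that one period of the rotation on $C_0$ carries the configuration at $f^\infty$ to the one at $f^{-\infty}$ along $g$ and back again along $f$, the bonding map factors on $\Hom^\IF_q$ as
\[
 g_{n,n+1}=\Phi^f_*\circ\Phi^g_*,\qquad \Phi^g_*\colon\IF^{b}\to\IF^{a},\quad \Phi^f_*\colon\IF^{a}\to\IF^{b},\qquad \IF^{a}\iso\Hom^\IF_q\Con(f^{-\infty},K(\{f^{-\infty}\})),
\]
where $\Phi^f_*$ (resp. $\Phi^g_*$) is induced by the flow of $\pi$ across the transition governed by $f$ (resp. $g$) and the stretches spent near $f^{\pm\infty}$ contribute only isomorphisms, absorbed into $\Phi^f_*,\Phi^g_*$. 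As $\IF^{b}$ is finite‑dimensional the direct limit equals the eventual image of $g_{n,n+1}$, and since $\im g_{n,n+1}\subset\im\Phi^f_*$ we get $n_0=\dim\Hom^\IF_q\Con(y_0,K)\le\dim\im\Phi^f_*$.

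\emph{Step 3: extracting the $\Phi_f$‑connections.} The basis vector $\langle p_i\rangle$ of $\IF^{a}$ represents the local index pair of the rest point $(f^{-\infty},p_i)$ of $\pi$. If $K$ contains no bounded solution of $\Phi_f$ whose $\alpha$‑limit equals $p_i$, then the part of the unstable set of $(f^{-\infty},p_i)$ that survives in an isolating neighbourhood reduces to $\{(f^{-\infty},p_i)\}$ (again using (H2) to rule out connections inside $f^{-\infty}$), so flowing $\langle p_i\rangle$ across the $f$‑transition carries it into a contractible part of the target index pair, i.e. $\Phi^f_*\langle p_i\rangle=0$. Hence $\dim\im\Phi^f_*\le\#\{\,i:\ p_i\ \text{is the}\ \alpha\text{-limit of some}\ \Phi_f\text{-solution lying in}\ K\,\}$. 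Each such $p_i$ is the $\alpha$‑limit of at least one solution $u$ of $\dot u+Au=\hat f(t,u)$ with $(f^t,u(t))\in K$ for all $t$, and solutions attached to distinct $p_i$ are pairwise distinct; combining with Step 2 there are at least $n_0$ pairwise distinct such solutions, each connecting equilibria of Morse index $q$ (being contained in $M_q$). Choosing any $n_0$ of them proves the theorem. (If there were infinitely many there would be nothing to prove, since $n_0\le b<\infty$ by Theorem \ref{th:171115-1441}.)

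\emph{Expected main obstacle.} Step 1 is routine bookkeeping. The substance lies in Step 2 --- making the direct‑limit/return‑map description and the factorisation through $\Phi^f_*$ rigorous in the infinite‑dimensional, strongly admissible parabolic framework --- and, above all, in the vanishing statement $\Phi^f_*\langle p_i\rangle=0$ of Step 3: this is a local analysis near $(f^{-\infty},p_i)$ controlling the part of its unstable set that survives in the isolating neighbourhood, and it is exactly here that one genuinely uses that $f$ and $g$ are gradient‑like rather than merely asymptotically autonomous.
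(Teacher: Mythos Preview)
Your Step 1 is correct and coincides with the paper's reduction: the paper organises it as an induction, using the attractor--repeller pairs $(A_q,R_q)$ and $(A_{q-1},M_q)$ together with the vanishing $\Hom^\IF_j\Con(y_0,M_k)=0$ for $j\neq k$ (which is exactly your appeal to \eqref{eq:171206-1429}), arriving at the same inequality $\dim\Hom^\IF_q\Con(y_0,M_q)\geq n_0$.

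For the base case $K=M_q$ the two arguments diverge. Your Steps 2--3 attempt a direct analysis of the bonding maps in the direct-limit model, factoring them as $\Phi^f_*\circ\Phi^g_*$ and then proving the local vanishing $\Phi^f_*\langle p_i\rangle=0$ for equilibria $p_i$ not reached by any $\Phi_f$-solution. As you yourself flag, both the factorisation (a two-sided refinement of Lemma \ref{le:171115-1442} with slices at phase $f^{-\infty}$ as well) and the vanishing require substantial additional work not supplied by the existing lemmas; you are effectively redeveloping localised versions of Theorem \ref{th:171115-1441} and the uniformity property rather than invoking them. The paper sidesteps all of this with a short attractor--repeller argument: let $\mathcal{E}_0\subset K(f^\infty)$ be the equilibria that are \emph{not} the $\omega$-limit of any $\Phi_f$-solution in $K$. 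Then $\{f^\infty\}\times\mathcal{E}_0$ is a repeller in $K$ (no past in $K$, since $g$-solutions have their $\alpha$-limits at $f^\infty$ and there are no equal-index connections inside the limit equations), and it lives over the single point $f^\infty$, so the uniformity property \cite[Corollary 4.10]{article_index} forces $\Hom^\IF_*\Con(y_0,\{f^\infty\}\times\mathcal{E}_0)=0$. The attractor--repeller sequence then gives $\Hom^\IF_q\Con(y_0,K)\iso\Hom^\IF_q\Con(y_0,A)$ for the complementary attractor $A$, and a single application of Theorem \ref{th:171115-1441} to $A$ yields $\dim\Hom^\IF_q\Con(y_0,A)\leq |\mathcal{E}^+|$, where $\mathcal{E}^+=K(f^\infty)\setminus\mathcal{E}_0$ is the set of equilibria that \emph{are} hit by some $\Phi_f$-solution. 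Distinct elements of $\mathcal{E}^+$ give distinct connecting solutions, and the count follows.

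In short: your plan is sound in outline, but the paper obtains the same conclusion with one repeller-stripping step and one black-box use of Theorem \ref{th:171115-1441}, avoiding the need to make your return-map factorisation and Step 3 vanishing rigorous.
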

		
		\begin{proof}
			Let $M_k$ denote the set of all $(y,x)\in K$ such that there are a solution $u:\IR\to \omega(y_0)\times X$
			and equilibria $e^-, e^+\in E$ such that $(f^-,e^-), (f^+,e^+)\in \{f^{-\infty},f^\infty\}\times E$, $m(e^-) = m(e^+)=k$ and $u(t)\to (f^\pm,e^\pm)$ as $t\to\pm\infty$.

			Every equilibrium is hyperbolic by assumption, whence it follows that $M_k = \emptyset$ for all $k\in\IN$
			sufficiently large. Moreover,
			$f$ and $g$ are both gradient-like, so the family $(M_k)_{k\in\{1,\dots,k_0\}}$ is a Morse-decomposition
			of $K$. 
			
			We will prove the theorem by induction on the number of Morse sets $M_k$. 
			Assume there are less than $\overline{n}_0$ pairwise distinct solutions connecting equilibria of Morse-index $q$. We need to show that $\dim\Hom^\IF_q\Con(f_0, K) < \overline{n}_0$.
			
			First, the case where $K$ is equal to $M_{k_0}$ for some $k_0\in\IN$. Let $\mathcal{E}$ denote the set of
			equilibria in $M_{k_0}$ at $f^\infty$ i.e., $\mathcal{E} = M_{k_0}(f^\infty)$. $\mathcal{E}$ can be split into those having
			a past connection and those not having a past connection. We say that $e\in \mathcal{E}$ has a past connection
			if there is a solution $u:\;\IR\to M_{k_0}$ such that $u(t)\to (f^\infty, e)$ as $t\to\infty$.
			
			We write $\mathcal{E}_0$ to denote the set of all equilibria not having a past connection. It follows that
			$\{f^\infty\}\times \mathcal{E}_0$ is a repeller in $M_{k_0}$. The uniformity property of the index \cite[Corollary 4.10]{article_index} implies
			that $\Hom^\IF_q\Con(y_0, \{f^\infty\}\times E) = 0$. Let $A$ denote the attractor associated with $\mathcal{E}_0$,
			so $(f_0, M_{k_0}, A, \mathcal{E}_0)$ is an attractor-repeller decomposition, giving rise to a long exact sequence.
			\begin{equation*}
				\xymatrix@1{
					\ar[r] &\relax\underbrace{\Hom^\IF_q\Con(y_0, \mathcal{E}_0)}_{=0} \ar[r] &\Hom^\IF_q\Con(y_0, M_{k_0}) \ar[r]
					&\ar[r] \Hom^\IF_q\Con(y_0, A) \ar[r]&
					}
			\end{equation*}
			Consequently, $\Hom^\IF_q\Con(y_0, M_{k_0})$ and $\Hom_q\Con(y_0, A)$ are isomorphic. In view
			of Theorem \ref{th:171115-1441}, $\dim\Hom^\IF_q\Con(y_0, A) \leq \dim \Hom^\IF_q\Con(f^\infty, \mathcal{E}^+)$, where $\mathcal{E}^+$
			is the set of all equilibria having a past connection. It well-known that $\dim\Hom_q\Con(f^\infty, \mathcal{E}^+)$
			is equal to the number of equilibria in $\mathcal{E}^+$ which cannot be larger than the upper bound $n_0$ on the number of
			distinct connections.
			
			Let $A_q$ consist of all $M_k$ for $k\leq q$ and all connecting orbits from $M_k$ to $M_l$
			where $l<k\leq q$. The corresponding repeller is $R_q$ consisting of all $M_k$ where $k>q$
			and all connecting orbits from $M_k\to M_l$ where $k>l>q$.
			
			If $q<k_0$, it is known by induction that $\Hom^\IF_q\Con(y_0, R_q) = 0$ and $\dim \Hom^\IF_q\Con(f_0, A_q)<\overline{n}_0$.
			The attractor-repeller decomposition gives rise to a long exact sequence \cite{article_naci_1}
			\begin{equation*}
				\xymatrix@1{
					\ar[r] &\Hom^\IF_q\Con(y_0,A_q) \ar[r]^-{i_q}
					& \Hom^\IF_q\Con(y_0, K) \ar[r]
					& \relax\underbrace{\Hom^\IF_q\Con(y_0, R_q)}_{=0} \ar[r] &
				}
			\end{equation*}
			$i_q$ must be surjective and hence $\dim\Hom^\IF_q\Con(f_0,K) \leq \Hom^\IF_q\Con(f_0, A_q) < \overline{n}_0$.
			
			If $q=k_0$, we take $A_{q-1}$ and $R_{q-1} = M_q$ instead of $A_q$ and $R_q$. 
			The long exact attractor-repeller sequence	reads as follows.
			\begin{equation*}
			\xymatrix@1{
				\ar[r] & \relax\underbrace{\Hom^\IF_q\Con(y_0,A_{q-1})}_{=0} \ar[r]
				& \Hom^\IF_q\Con(y_0, K) \ar[r]^-{p_q}
				& \Hom^\IF_q\Con(y_0, M_q) \ar[r] &
			}
			\end{equation*}
			$p_q$ must be injective, so $\dim\Hom^\IF_q\Con(y_0, K)\leq \dim\Hom^\IF_q\Con(y_0, M_q) < \overline{n}_0$.
		\end{proof}

		In the corollary below, we have merely replaced the assumption that $g$ is gradient-like
		by a regularity assumption.
		The rather obvious strategy for its proof is to perturb $g$ to enable the use of Theorem \ref{th:171208-1837}.

		\begin{corollary}
			\label{co:171212-1331}
			In addition to the above hypotheses, assume that $E$ is a reflexive space,
			$f$ is gradient-like $g\in C^\infty(\IR\times E, W)$.
			
			If $K\subset\Hull^+(y_0)\times E$ is a compact isolated invariant set with
			\begin{equation*}
			\dim\Hom_q\Con(y_0(f,g), K)=n_0 \quad \text{for some }q\in\IZ
			\end{equation*}
			then there are $n_0$ pairwise distinct solutions $u_1,\dots, u_{n_0}$ of $\Phi_f$
			connecting equilibria of Morse-index $q$.
		\end{corollary}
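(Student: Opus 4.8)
The plan is to reduce the statement to Theorem \ref{th:171208-1837} by replacing $g$ with a nearby smooth parameter $\tilde g$ that is gradient-like, and then to exploit the fact that a perturbation of $g$ changes neither $\Phi_f$ nor the limit equations of $f$, so that the solutions produced by the theorem are genuine solutions of the original problem.

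First I would record that, $f$ being gradient-like, it is in particular asymptotically gradient-like, so the limit equations $\dot u + Au = f^{\pm\infty}(0,u)$ satisfy (H1) and (H2). The asymptotic limits of $g$ are $f^{\mp\infty}$, i.e.\ the same two autonomous equations with the roles of $\pm\infty$ interchanged, so $g$ — and every $C^\infty$ parameter close to $g$ with the same limits — is asymptotically gradient-like as well. By \cite[Theorem 2.4]{article_generic17}, which is where the reflexivity of $E$ enters, an arbitrarily small perturbation $\tilde g\in C^\infty(\IR\times E,W)$ of $g$, carrying the same asymptotic limits $f^{\mp\infty}$, can be chosen so that $\tilde g$ is gradient-like; in particular $(f,\tilde g)$ is again an asymptotically autonomous cycle.

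Next I would transport the index. Fix a strongly skew-admissible isolating neighbourhood $N$ for $K$ with respect to $y_0 = y_0(f,g)$. The assignment $\tilde g\mapsto y_0(f,\tilde g) = \hat F^\rho(z_0)$ from Section \ref{sec:parameter} is continuous, since $\tilde g\to g$ in $Y$ forces the associated maps $F\colon C_0\to E_0$ to converge uniformly on the compact annulus $C_0$, and $\omega(y_0(f,\tilde g)) = \Hull(f)\cup\Hull(\tilde g)$ because $\tilde g$ keeps the prescribed limits. Arguing exactly as in the proof of Lemma \ref{le:171115-1506} (via Theorems 5.6 and 5.12 of \cite{article_naci}), for $\tilde g$ sufficiently close to $g$ the set $N$ is an isolating neighbourhood for $\tilde K := \Inv(N)\cap(\Hull^+(y_0(f,\tilde g))\times E)$ and $\Hom^\IF_q\Con(y_0(f,\tilde g),\tilde K)\iso\Hom^\IF_q\Con(y_0(f,g),K)$, so its dimension is again $n_0$.

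Finally, Theorem \ref{th:171208-1837} applies to the cycle $(f,\tilde g)$ and the compact isolated invariant set $\tilde K$ — both $f$ and $\tilde g$ being gradient-like — and yields $n_0$ pairwise distinct solutions $u_1,\dots,u_{n_0}$ of $\Phi_f$ connecting equilibria of Morse-index $q$ of the limit equations of $f$. Since $\Phi_f$ and the equilibria of $\dot u + Au = f^{\pm\infty}(0,u)$ do not depend on $g$, these $u_i$ are genuine solutions of $\dot u + Au = f(t,u)$ with the asserted connecting behaviour, which is the claim. The one genuinely non-routine step is the first one — producing the gradient-like perturbation $\tilde g$ while keeping the asymptotic limits, and hence the cycle structure, fixed — and that is precisely the content of \cite[Theorem 2.4]{article_generic17}; everything else is the continuation argument already carried out for Lemma \ref{le:171115-1506}.
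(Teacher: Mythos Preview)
Your proposal is correct and follows essentially the same approach as the paper: perturb $g$ to a gradient-like $\tilde g$ via \cite[Theorem 2.4]{article_generic17}, transport the index by the continuation argument of Lemma \ref{le:171115-1506}, and invoke Theorem \ref{th:171208-1837}. Your write-up is in fact more explicit than the paper's in noting that the resulting solutions of $\Phi_f$ are independent of the choice of $g$, which is the point that makes the reduction work.
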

		
		\begin{proof}
			In view of Theorem 2.4 in \cite{article_generic17}, we can find an arbitrarily small perturbation $\tilde g$
			of $g$ such that $g$ is gradient-like and $(\tilde g - g)^t\to 0$ as $t\to\pm\infty$. 
			Let $y_0:=y_0(f,g)$ denote the initial element 
			with respect to $(f,g)$ and $\tilde y_0 := y_0(f, \tilde g)$ the initial element obtained with respect
			to $(f,\tilde g)$. Using almost the same arguments as in the proof of Lemma \ref{le:171115-1506},
			we see that $\Hom_*\Con(y_0, K) \iso \Hom_*\Con(\tilde y_0, \Inv N)$ provided that
			$\tilde g$ is sufficiently close to $t$ in $Y$, where $N\subset Y\times X$
			is an isolating neighbourhood for $K$.
			
			The result now follows directly from Theorem \ref{th:171208-1837}.
		\end{proof}
		
		For the statement of the following theorem, we will assume that 
		\begin{enumerate}
			\item[(REF)] $E$ is a reflexive space
			\item[(LIN)]
		$f$ is asymptotically linear. Additionally, we will also assume that
		the asymptotical linearity is autonomous. More precisely, let $B\in\mathcal{L}(E,W)$
		be such that 
		\begin{equation*}
			\eps f(t,\eps^{-1} u) - Bu \to 0 \text{ as }\eps \to 0
		\end{equation*}
		uniformly on $\IR\times B_1(0;E)$. Let $m(B)$ denote the dimension
		of the generalized eigenspace associated with the negative eigenvalues
		of $A-B$. $m(B)$ is always finite since $A$ is assumed to have compact resolvent.
		\end{enumerate}
		
		Let $\mathcal{F}$ denote the set of all $f\in C^\infty(\IR\times E, W)$ such that:
		\begin{enumerate}
			\item $f$ is asymptotically autonomous
			\item $f(t,.)\to f^{\pm\infty}(0,.)$ in $C^1(E,W)$ as $t\to\pm\infty$
			\item (LIN) holds
		\end{enumerate}
		
		We treat $\mathcal{F}$ as a subspace of $C^1(\IR\times E, W)$, where $C^1(\IR\times E, W)$
		is equipped with a metric such that a $f_n\to f$ in $C^1(\IR\times E, W)$ iff $f_n\to f$ and $\Diff f_n\to \Diff f$
		uniformly on sets of the form $\IR\times B_\eps(0;E)$ for $\eps>0$.
		
		\begin{theorem}
			\label{th:171213-1457}
			For a generic $f\in\mathcal{F}$, 			
			there exists a (weakly) hyperbolic solution $u:\IR\to E$ of 
			\begin{equation*}
				\dot u + Au = f(t,u)
			\end{equation*}
			and equilibria $e^\pm$ of Morse-index $m(B)$ of
			\begin{equation*}
				\dot u + Au = f^{\pm\infty}(0,u)
			\end{equation*}
			such that $u(t)\to e^\pm$ in $E$ as $t\to\pm\infty$.
		\end{theorem}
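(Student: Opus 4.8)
The plan is to exhibit the required solution as a connecting orbit inside the compact invariant set "at infinity" determined by the autonomous asymptotic linearization $B$, and to extract its existence from a homology Conley index computation combined with Corollary \ref{co:171212-1331}.

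First I would take as companion parameter the time reversal, $g(t,x) := f(-t,x)$. Since $f\in C^\infty(\IR\times E, W)$, also $g\in C^\infty(\IR\times E, W)$, and $g^t\to f^{\mp\infty}$ as $t\to\pm\infty$, so $(f,g)$ is an asymptotically autonomous cycle and the initial element $y_0 := y_0(f,g)$ of Section \ref{sec:parameter} is defined, with $\Hull^+(y_0)=\Hull(f)\cup\Hull(g)$. From $\eps f(t,\eps^{-1}u)-Bu\to 0$ uniformly on $\IR\times B_1(0;E)$ one gets the same statement for $g$, so the asymptotic-linearity hypothesis of \cite[Theorem 7.16]{article_naci} holds for the whole cycle. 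That theorem then provides a largest compact (hence isolated) invariant set $K_\infty\subset\Hull^+(y_0)\times E$ with
\begin{equation*}
\Hom^\IF_q\Con(y_0,K_\infty)\iso\begin{cases}\IF & q=m(B),\\ 0 & q\neq m(B),\end{cases}
\end{equation*}
where one uses that $m(B)$ — the dimension of the generalized eigenspace of $A-B$ for its negative spectrum — equals the unstable dimension of the zero solution of $\dot u+Au=Bu$.

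Next I would invoke genericity. By \cite[Theorem 2.4]{article_generic17} there is a residual subset of $\mathcal{F}$ on which $f$ is gradient-like in the sense of (H1)--(H3); intersecting with the (again residual) set on which every bounded full solution of $\Phi_f$ connecting equilibria of equal Morse-index is weakly hyperbolic — the nonautonomous analogue of condition (3) of the introduction, obtained by a Sard--Smale argument — one gets a residual $\mathcal{G}\subset\mathcal{F}$ on which both properties hold. For $f\in\mathcal{G}$ the hypotheses of Corollary \ref{co:171212-1331} are met with $K:=K_\infty$ (using that $E$ is reflexive and $g=f(-\cdot,\cdot)\in C^\infty$), $q:=m(B)$ and $n_0:=\dim\Hom^\IF_{m(B)}\Con(y_0,K_\infty)=1$. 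The corollary then yields a solution $u:\IR\to E$ of $\dot u+Au=f(t,u)$ together with equilibria $e^\pm$ of $\dot u+Au=f^{\pm\infty}(0,u)$ of Morse-index $m(B)$ such that $u(t)\to e^\pm$ in $E$ as $t\to\pm\infty$.

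Finally, since $m(e^-)=m(e^+)$, membership of $f$ in $\mathcal{G}$ forces $\dot v+Av=\Diff f(t,u(t))v$ to have no nontrivial bounded solution, i.e.\ $u$ is weakly hyperbolic. To reach the exponential dichotomy asserted, I would combine the results of \cite{sacker1994dichotomies} with compactness of the resolvent of $A$: the linearized evolution operator is eventually compact on bounded sets, so over $\IR$ the absence of a nontrivial bounded solution is equivalent to an exponential dichotomy. The main obstacle is precisely this last equivalence in the abstract parabolic framework, together with the bookkeeping needed to realize simultaneously on one residual set the gradient-likeness of $f$, hyperbolicity of the equilibria of the limit equations, and weak hyperbolicity of equal-index connections; a secondary point is to confirm that $K_\infty$ and the index formula of \cite[Theorem 7.16]{article_naci} are genuinely available for the cycle built from $f$ and its time reversal.
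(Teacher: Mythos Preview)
Your setup coincides with the paper's: the companion $g(t,x)=f(-t,x)$, the index computation via \cite[Theorem 7.16]{article_naci} for the maximal compact invariant set over the cycle, and the appeal to Corollary~\ref{co:171212-1331} to produce a connection between equilibria of Morse index $m(B)$. The reduction from weak hyperbolicity to an exponential dichotomy via Sacker--Sell is also exactly what the paper does (it quotes \cite[Theorem~E]{sacker_sell_dich} before the proof).

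The substantive divergence is in how genericity together with \emph{weak hyperbolicity} of the solution is obtained. You restrict at the outset to a residual set $\mathcal{G}\subset\mathcal{F}$ on which, in addition to (H1)--(H3), every equal-index connection is weakly hyperbolic, invoking an unspecified Sard--Smale argument. The paper does \emph{not} rely on such a genericity statement. Instead it argues in two steps: first, given an arbitrary $f$, perturb to a gradient-like $\tilde f$ and apply Corollary~\ref{co:171212-1331} to obtain a connecting solution; second --- and this is the part you are missing --- it shows that the set of $f$ possessing a weakly hyperbolic solution of the required type is \emph{open}, by an explicit persistence argument. After translating the solution to $u\equiv 0$, one studies the nonlinear operator
\[
G:\;C^{1,\delta}(\IR,W)\cap C^{0,\delta}(\IR,W^1)\to C^{0,\delta}(\IR,W),\qquad G(u)(t)=u_t-Au+f(t,u),
\]
whose linearization $L=\Diff G(0)$ is invertible by \cite[Theorem~4.a.4]{brunpol}; a contraction-mapping argument then produces, for every nearby $\tilde f$, a nearby solution $u_{\tilde f}$, and closeness to $0$ forces $u_{\tilde f}$ to inherit weak hyperbolicity and the correct limiting Morse indices.

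So the gap in your proposal is precisely the black-box ``Sard--Smale'' step: you assert residuality of the weak-hyperbolicity condition but do not prove it, and nothing in the paper (or in the cited \cite{article_generic17}, which only supplies (H3)) furnishes it. The paper sidesteps this entirely with the concrete openness/persistence argument above. If you want to salvage your route, you would need to actually carry out the transversality argument in the abstract parabolic setting; otherwise, the contraction-mapping step is the missing ingredient.
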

		
		It is sufficient to prove the theorem with respect to the existence
		of weakly hyperbolic solutions as it follows from \cite[Theorem E]{sacker_sell_dich}
		that a weakly hyperbolic solution satisfying the conclusion
		of the theorem is hyperbolic.
		
		\begin{proof}
			Let $f\in \mathcal{F}$ be arbitrary and take $g$ to be
			the time inverse introduced at the beginning of this section. 
			It is clear that $g$ is also asymptotically linear with respect
			to the same asymptotic linearity as $f$.
			
			It follows from \cite[Theorem 7.16]{article_index} that
			there exists a largest compact invariant subset $K$ of $\Hull^+(y_0(f,g))$
			having the following homology Conley index.
			\begin{equation}
				\label{eq:171212-1338}
				\Hom^\IF_q\Con(y_0(f,g), K) \iso \begin{cases} \IF & q=m(B) \\ 0 & q\neq m(B)
				\end{cases}
			\end{equation}
			
			It follows as in the proof of Corollary \ref{co:171212-1331}
			that there exists an arbitrarily small perturbation $\tilde f$
			of $f$ such that (H3) is fulfilled with respect to $\tilde f$. Let $\tilde y_0:=y_0(\tilde f,g)$
			be an initial element for $(\tilde f, g)$.
			
			Moreover for sufficiently small perturbations, 
			$\Con(y_0(f,g),K)$ continues to $\Con(\tilde y_0, \tilde K)$.
			It now follows directly from Corollary \ref{co:171212-1331} that there exists a solution $u$ 
			as claimed.

			Finally, we need to prove that the set of all $f\in \mathcal{F}$ for
			which there exists such a solution is open. We can assume without loss
			of generality that the hyperbolic solution is $u\equiv 0$. Define an
			operator 
			\begin{equation*}
				G:\; \underbrace{C^{1,\delta}(\IR,W) \cap C^{0,\delta}(\IR, W^1)}_{=:\mathcal{X}}\to 
				C^{0,\delta}(\IR,W)
			\end{equation*}
			by 
			\begin{equation*}
				G(u)(t) := u_t(t) - Au(t) + f(t,u(t))
			\end{equation*}
			
			$G$ is continuously Fr\'echet-differentiable (see e.g. \cite[Lemma 4.2]{article_generic17})
			and $L:=\Diff G(0)$ is invertible \cite[Theorem 4.a.4]{brunpol}.
			
			Let $\tilde f\in\mathcal{F}$ be a perturbation of $f$. 
			We seek for a solution $\tilde u$ of
			\begin{equation*}
				\tilde u_t(t) - A\tilde u(t) + \tilde f(t,u(t)) = 0
			\end{equation*}
			or equivalently
			\begin{equation*}
				Lu + \tilde f(t,u) - \Diff_x f(t,0)u(t) = 0.
			\end{equation*}
			
			Choosing $\eps>0$ small enough,
			\begin{equation}
				\label{eq:171213-1354}
				L^{-1}\left(\tilde f(t,u) - \Diff_x f(t,0)u(t)\right)
			\end{equation}
			is a contraction mapping on $B_\eps(0;\mathcal{X})$ provided that $d(\tilde f,f)<\eps$, 
			whence it follows
			that there exists a unique fixed point in $B_\eps(0;\mathcal{X})$ provided
			$\tilde f - f$ is sufficiently small in $\mathcal{F}$.
			
			Let $\tilde f_n$ be a sequence in $B_\eps(f;\tilde F)$ with
			$\tilde f_n\to f$, and let $u_n\in B_\eps(0;\mathcal{X})$ denote
			the unique solution of \eqref{eq:171213-1354}. For each $n\in\IN$,
			there are equilibria $e^\pm_n$ such that $u_n(t)\to e^\pm_n$ as $t\to\pm\infty$.
			We have $e^\pm_n\to 0$ as $n\to\infty$, so for large $n\in\IN$, $e^\pm_n$
			is a hyperbolic equilibrium having Morse-index $m(B)$. Furthermore,
			$u_n\to 0$ uniformly on $\IR$ implies that $u_n$ 
			must be a weakly hyperbolic solution for almost all $n$.
			
			It follows by contradiction that for all $\tilde f\in \mathcal{F}$ situated in a sufficiently
			small neighbourhood of $f$, there exists a weakly hyperbolic solution $u$ with 
			the stated properties.
			
		\end{proof}

	\end{section}


\end{document}